\newcommand{\SL}{{\operatorname{SL}}}
\newcommand{\PGL}{{\operatorname{PGL}}}
\newcommand{\PSL}{{\operatorname{PSL}}}
\newcommand{\PSU}{{\operatorname{PSU}}}
\newcommand{\Sp}{{\operatorname{Sp}}}
\newcommand{\PSp}{{\operatorname{PSp}}}
\newcommand{\Sz}{{\operatorname{Sz}}}
\newcommand{\OO}{{\operatorname{\mbox{P}\Omega}}}
\newtheorem{defi}{Definition}[section]
\newtheorem{teor}[defi]{Theorem}
\newtheorem{cor}[defi]{Corollary}
\newtheorem{prop}[defi]{Proposition}
\newtheorem{lemma}[defi]{Lemma}
\newtheorem{conj}[defi]{Conjecture}
\newtheorem{rem}[defi]{Remark}
\title[HS conjecture for simple groups]{The Herzog-Sch\"{o}nheim conjecture for simple and symmetric groups}
\author{Martino Garonzi}
\address{Martino Garonzi. University of Ferrara (Italy), Dipartimento di Matematica e Informatica.
ORCID: https://orcid.org/0000-0003-0041-3131}
\email{martino.garonzi@unife.it}
\author{Leo Margolis}
\address{Universidad Aut\'onoma de Madrid, Departamento de Matem\'aticas,  C/ Francisco Tom\'as y Valiente 7, Facultad de Ciencias, m\'odulo 17 and 
Instituto de Ciencias Matem\'aticas, C/ Nicol\'as Cabrera 13, 28049 Madrid, Spain}
\email{leo.margolis@uam.es, leo.margolis@icmat.es}
\subjclass[2020]{20D60, 20B30, 20D05}
\keywords{Herzog-Sch\"onheim Conjecture, Cosets, Partitions, Symmetric groups, Finite simple groups}
\thanks{Martino Garonzi was supported by FIRD 2025 (unife). Leo Margolis was supported by the Spanish Ministry of Science and Innovation, through the Ram\'on y Cajal grant program. We express our gratitude to the Universities of Brasilia and Padova for their hospitality.}
\begin{document}

\begin{abstract}
The Herzog-Sch\"{o}nheim conjecture states that if $H_1$, $\ldots$, $H_k$ are subgroups of a group $G$ and $x_1,\ldots,x_k$ are elements of $G$ such that $H_1x_1$, $\ldots$, $H_kx_k$ is a partition of $G$ into cosets, then two of these subgroups must have the same index. We prove this conjecture for simple groups and for symmetric groups.
\end{abstract}

\maketitle

\section{Introduction}

Erd\H{o}s conjectured in the 1950's that in any non-trivial partition of the integers by arithmetic progressions the biggest common difference appearing comes up at least twice. I.e. if for some $k \geq 2$ we have $\mathbb{Z} = \bigcup_{i=1}^k (n_i \mathbb{Z} + d_i)$ where $n_1 \leq ... \leq n_k$ and $d_1$,...,$d_k$ are positive integers and $(n_i\mathbb{Z} + d_i) \cap (n_j\mathbb{Z} + d_j) = \emptyset$ for $i \neq j$, then $n_{k-1} = n_k$. This conjecture was proved independently by Davenport, Mirsky, Newman and Rado and opened a fruitful line of research about so called covering systems. See \cite{PorubskySchonheim} for more historic details and \cite{Ginosar} for an algebraic proof of Erd\H{o}s' conjecture.

%Motivated by a result of Davenport, Mirsky, Newman and Rado, who solved a question of Erd\H{o}s in the 1950's, 
Viewing the integers as an additive group and the arithmetic progressions as cosets inside this group, this result lead Herzog and Sch\"{o}nheim to put forward the following conjecture in 1974:

\begin{conj}[Herzog-Sch\"{o}nheim Conjecture \cite{HerzogSchonheim}]
Let $G$ be a group. Assume $H_1$,...,$H_k$ are proper subgroups of $G$, with $k \geq 2$, and $x_1$,...,$x_k$ elements of $G$ such that $G = \bigcup_{i=1}^k H_ix_i$ and $H_ix_i \cap H_jx_j = \emptyset$ for $i \neq j$. Then $[G:H_i] = [G:H_j]$ for some $i \neq j$.
\end{conj}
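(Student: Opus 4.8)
The plan is to assume, for contradiction, that $G = \bigsqcup_{i=1}^k H_i x_i$ is an exact partition into right cosets with all indices $n_i = [G:H_i]$ pairwise distinct; counting cardinalities gives the density identity $\sum_{i=1}^k 1/n_i = 1$. Since the indices are now forced to be distinct, each index value is used at most once, so the left-hand side is bounded above by $\Sigma(G) := \sum_{n \in I(G)} 1/n$, where $I(G) = \{[G:H] : H < G\}$ is the set of indices of proper subgroups. My first goal is therefore to prove the purely arithmetic statement $\Sigma(G) < 1$ for every nonabelian finite simple group $G$, which immediately contradicts the density identity and settles the simple case.

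To establish $\Sigma(G) < 1$ I would invoke the classification of finite simple groups together with the description of maximal subgroups. The point is that $\Sigma(G)$ is dominated by the smallest indices: every proper subgroup lies inside a maximal one, the minimal index of a nonabelian simple group is at least $5$ and grows with $|G|$, and distinct subgroups contribute only through their distinct index \emph{values}. For alternating groups this is transparent, since the small-index subgroups lie in the intransitive maximal subgroups $(S_m \times S_{n-m}) \cap A_n$ of index $\binom{n}{m}$, whose reciprocals sum to roughly $2/n$, the remaining imprimitive and primitive maximal subgroups having much larger index; an analogous parabolic-versus-primitive split handles the groups of Lie type, and the sporadic groups are a finite check. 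The small cases ($A_5$, $A_6$, $\PSL(2,7)$, and so on) I would verify directly; here $A_5$ with $\Sigma(A_5) = \tfrac15+\tfrac16+\tfrac1{10}+\cdots \approx 0.72$ is essentially extremal, and all values stay safely below $1$.

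The main obstacle, and the reason symmetric groups must be treated separately, is that the density bound \emph{fails} for $S_n$: the sign homomorphism produces the index-$2$ subgroup $A_n$, and together with the many other small-index subgroups one already finds $\Sigma(S_4) = \tfrac32 > 1$, so no arithmetic contradiction is available. Here I would exploit the sign homomorphism $\epsilon \colon S_n \to \{\pm 1\}$ structurally. Each coset $H_i x_i$ is of one of three types: contained in $A_n$, contained in its nontrivial coset, or split evenly between the two (precisely when $H_i \not\leq A_n$). Intersecting the whole partition with $A_n$ yields an exact cover of $A_n$ whose parts are cosets of $H_i$ (of index $n_i/2$) and of $H_i \cap A_n$ (of index $n_i$), and multiplying by a fixed odd permutation gives a second such cover of the other $\epsilon$-fibre.

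I would then combine these induced covers of $A_n$ with the simple-group analysis above and with the parity bookkeeping $|A_n| = \sum_{\text{type }E}|H_i| + \tfrac12\sum_{\text{split}}|H_i|$ to force a repeated index. The delicate point is that passing from $S_n$ to $A_n$ can halve some indices and create collisions between the two induced families, so distinctness is not automatically inherited; controlling these collisions, and ruling out the tight small cases such as $S_3$ (where $\Sigma(S_3)=1$, yet the odd permutations cannot be tiled by cosets of order-$2$ subgroups), is where the real work lies. I expect this coset-geometry analysis under the sign map, rather than any single counting inequality, to be the crux of the symmetric-group case.
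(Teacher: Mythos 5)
Your treatment of simple groups is, in outline, exactly the paper's argument: your quantity $\Sigma(G)$ is the paper's $\mathcal{J}(G)$ minus the summand $1=1/[G:G]$, so your target $\Sigma(G)<1$ is precisely the paper's criterion $\mathcal{J}(G)<2$, which the paper then verifies for all finite simple groups via CFSG and bounds on maximal subgroups, along the lines you sketch (the real work being to control the contribution of \emph{all} subgroups, not just maximal ones, which the paper does via the recursion $\mathcal{J}(G)-1\leq\sum_M \mathcal{J}(M)/[G:M]$ and Robin's bound on $\sigma(m)/m$). The genuine gap is in your symmetric-group case, and it is twofold. First, your premise that the arithmetic bound fails for $S_n$ is wrong for $n\geq 7$: for $n\geq 5$ every proper subgroup of $S_n$ other than $A_n$ has index at least $n$, so $\Sigma(S_n)=\tfrac12+(\text{reciprocals of indices}\geq n)$, and the paper proves exactly that $\mathcal{J}(S_n)<2$, i.e.\ $\Sigma(S_n)<1$, for all $n\geq 7$, by combining the submultiplicativity $\mathcal{J}(S_n)\leq\mathcal{J}(A_n)\,\mathcal{J}(S_n/A_n)=\tfrac32\,\mathcal{J}(A_n)$ with an induction showing $\mathcal{J}(A_n)<\tfrac43$ for large $n$. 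The bound genuinely fails only for $S_3,\dots,S_6$ (e.g.\ $\mathcal{J}(S_4)=\tfrac52$ and already $1+\tfrac12+\tfrac15+\tfrac16+\tfrac1{10}+\tfrac1{12}>2$ for $S_5$), and for those the paper simply quotes the known verification of the conjecture for groups of order at most $1440$ \cite{MS}; no structural argument is needed anywhere.

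Second, the structural replacement you propose does not close. Intersecting the partition with the sign fibres turns a coset $H_ix_i$ into a coset of index $n_i/2$ in $A_n$ when $H_i\leq A_n$, and of index $n_i$ when $H_i\not\leq A_n$; hence the repeated index that HS for $A_n$ hands you may be a collision $n_i/2=n_j$ between parts of the two kinds, which gives no repetition among the original indices $n_1,\dots,n_k$. (There are also degenerate cases, e.g.\ $H_i=A_n$, where the induced partition of a fibre is a single part and HS for $A_n$ says nothing.) You flag this collision problem yourself as ``where the real work lies,'' but you offer no mechanism to resolve it, so the symmetric-group case remains unproven in your proposal. The instructive point of comparison is that the paper never dissects the coset partition at all: it passes from $S_n$ to $A_n$ purely arithmetically, through the inequality $\mathcal{J}(G)\leq\mathcal{J}(N)\,\mathcal{J}(G/N)$, which is immune to the collision issue that blocks your coset-geometric route.
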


It is known that solving the Herzog-Sch\"onheim Conjecture for all groups is equivalent to solving it for finite groups and that even for an infinite group $G$ a finite  partition into cosets only involves subgroups of finite index \cite{KorecZnam}. The conjecture is known to hold for several classes of groups $G$, e.g. if $G$ possesses a Sylow tower (so, in particular for supersolvable groups) \cite{BFF}, if the order of $G$ is divisible by few primes \cite{GinosarSchnabel} or if $|G| \leq 1440$ \cite{MS}. Cf. also \cite{AS, Chouraqui19, Chouraqui21} for some recent papers dealing with the conjecture from other perspectives. In particular, \cite{Chouraqui19} considers free groups of finite rank. An easy observation is that, if a group $G$ satisfies the conjecture, then any quotient of $G$ does too.

However, until now the Herzog-Sch\"onheim Conjecture has rarely been considered for finite non-solvable groups. To our knowledge, only the groups $A_5$, $S_5$, $\Sz(8)$ and $\Sz(32)$ appeared explicitly in the literature \cite{GinosarSchnabel}, and the only other finite non-solvable groups for which the conjecture is known to hold are those of order less than 1440. 

We prove the following two results.

\begin{teor}\label{th:Symmetric}
The Herzog-Sch\"onheim Conjecture is true for symmetric groups.    
\end{teor}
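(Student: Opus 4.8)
The plan is to argue by contradiction: suppose $G=S_n$ admits a partition into cosets $H_1x_1,\dots,H_kx_k$ with $k\ge 2$ and all indices $n_i:=[G:H_i]$ pairwise distinct, and derive a contradiction. First I would dispose of small $n$: since $|S_n|\le 720<1440$ for $n\le 6$, these cases fall under \cite{MS} (and $S_n$ is supersolvable for $n\le 4$, so also under \cite{BFF}), and I may assume $n\ge 7$, in particular $S_n$ non-solvable with $A_n$ simple and $A_n$ the unique minimal normal subgroup of $G$. Counting cardinalities in the partition gives the basic identity $\sum_{i=1}^{k}1/n_i=1$, with the $n_i$ distinct divisors of $n!$. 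The second ingredient is the classical description of low-index subgroups of $S_n$: for $n\ge 7$ the only proper subgroup of index below $n$ is $A_n$ (of index $2$), the subgroups of index exactly $n$ are the point stabilisers $S_{n-1}$, and all remaining proper subgroups have index strictly greater than $n$. In particular no index lies strictly between $2$ and $n$.

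The core of the argument is a reduction to the simple group $A_n$ through the index-$2$ subgroup. I would split the blocks according to whether $H_i\le A_n$ (even blocks) or $H_i\not\le A_n$ (odd blocks, where $[H_i:H_i\cap A_n]=2$ and $[A_n:H_i\cap A_n]=n_i$). Intersecting the whole partition with $A_n$ produces a coset partition of $A_n$ whose blocks are $H_ix_i\cap A_n$ for the odd $i$ (contributing index $n_i$) together with those even blocks that happen to land inside $A_n$ (contributing index $n_i/2$). Applying the Herzog-Sch\"onheim Conjecture for the simple group $A_n$ — which I take as established in our simple-group case — this induced partition is either trivial or has two equal indices. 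The trivial case, upon also translating the complementary coset $S_n\setminus A_n$ into $A_n$, quickly forces a repeated index and hence a contradiction; a genuine coincidence of induced indices, since the $n_i$ are distinct, can only arise from a relation $n_j=2n_i$ with $i$ odd and $j$ even. Performing the symmetric intersection with $S_n\setminus A_n$ yields a second such partition and a second batch of relations, squeezing the whole configuration into a rigid arithmetic pattern linking the even and odd indices.

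It remains to show that this pattern is incompatible with the reciprocal identity and with the actual indices occurring in $S_n$. If the index $2$ appears (some $H_i=A_n$), that block covers half of $G$ and the remaining distinct indices, all $\ge n$ by the low-index dichotomy, must have reciprocals summing to $1/2$; if the index $2$ does not appear, all indices are $\ge n$ and their reciprocals sum to $1$. In both regimes I would combine $\sum_i 1/n_i=1$ with a $p$-adic valuation argument at a prime $p$ with $n/2<p\le n$ (which exists by Bertrand), so that $p\,\|\,|S_n|$ and every index has $p$-part equal to $1$ or to $p$; the resulting congruences, fed back into the rigidity relations from the $A_n$-reduction and combined with the explicit list of possible indices of $S_n$, should leave no admissible distinct-index solution.

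The main obstacle is precisely this last, residual case, in which no small index is used: ruling out that finitely many \emph{distinct} large indices of $S_n$ can have reciprocals summing to $1$ is a genuinely arithmetic problem, the group-theoretic shadow of the Davenport--Mirsky--Newman--Rado theorem. I expect that controlling it will require a careful determination of which divisors of $n!$ actually occur as subgroup indices of $S_n$, together with the valuation and congruence constraints above, rather than any soft structural argument.
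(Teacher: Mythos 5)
Your plan has a genuine gap, and you identify it yourself in the last paragraph: the ``residual case'' of finitely many \emph{distinct} large indices of $S_n$ with reciprocals summing to $1$ is never ruled out. The machinery you set up before that point cannot close it. Using HS for the simple group $A_n$ only as a black box (a yes/no statement about partitions of $A_n$) yields merely the rigidity relations $n_j=2n_i$ between even and odd blocks, and the Bertrand-prime valuation argument only produces congruences on the subgroup orders $|H_i|$ modulo $p$ --- constraints that are nowhere near strong enough to exclude all distinct-index solutions of $\sum_i 1/n_i=1$ among divisors of $n!$. So the proposal, as it stands, is a reduction of the problem to an open arithmetic question, not a proof. (A smaller omission: the theorem also covers \emph{infinite} symmetric groups, which the paper handles via the Schreier-Ulam-Baer theorem \cite{Baer} --- an infinite symmetric group has no proper subgroup of finite index.)

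The idea you are missing is that the paper never analyzes a hypothetical partition at all; it proves the quantitative bound $\mathcal{J}(S_n)<2$ for $n\geq 7$, where $\mathcal{J}(G)$ is the sum of reciprocals of \emph{all} distinct subgroup indices of $G$. Since any distinct-index coset partition would force $\mathcal{J}(S_n)\geq 1+\sum_i 1/n_i = 2$, this single inequality annihilates every possible configuration at once, including your residual case, with no combinatorial case analysis. Crucially, the paper's treatment of $A_n$ establishes not just that $A_n$ is HS but the stronger statement $\mathcal{J}(A_n)<4/3$ for $n\geq 9$ (Proposition~\ref{symfbound}, proved by induction using the classification of maximal subgroups of $A_n$ as intransitive, imprimitive, or primitive, the Praeger--Saxl bound $|M|\leq 4^n$ \cite{PS}, and Stirling's formula), and it is this quantitative form --- not HS for $A_n$ itself --- that transfers to $S_n$, via Lemma~\ref{basics}(2): $\mathcal{J}(S_n)\leq \mathcal{J}(A_n)\cdot\mathcal{J}(C_2)=\tfrac{3}{2}\,\mathcal{J}(A_n)<2$. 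Your instinct that the $A_n\leq S_n$ reduction is the right axis was correct, but it must be run at the level of the subgroup-index zeta-like sum $\mathcal{J}$, where it is a one-line multiplication, rather than at the level of partitions, where it leaves an intractable arithmetic residue. The small cases $n\leq 6$ are handled exactly as you propose, via \cite{MS}.
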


\begin{teor}\label{th:Simple}
The Herzog-Sch\"onheim Conjecture is true for simple groups.    
\end{teor}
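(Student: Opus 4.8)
The plan is to reduce Theorem~\ref{th:Simple} to a single arithmetic inequality about the set of subgroup indices. Recall that if $H_1x_1,\ldots,H_kx_k$ is a partition of $G$ into cosets then, comparing cardinalities, $\sum_{i=1}^k 1/[G:H_i]=1$. Arguing by contradiction, suppose the indices $[G:H_i]$ are pairwise distinct. Writing $I(G)$ for the set of indices of proper subgroups of $G$, the numbers $[G:H_1],\ldots,[G:H_k]$ are then distinct elements of $I(G)$, so
\[
1=\sum_{i=1}^{k}\frac{1}{[G:H_i]}\le\sum_{n\in I(G)}\frac1n=:\sigma(G).
\]
Hence it suffices to prove the clean statement that $\sigma(G)<1$ for every simple group $G$: this one inequality forbids a coset partition with distinct indices and yields the theorem uniformly. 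For cyclic $G\cong C_p$ one has $I(G)=\{p\}$ and $\sigma(G)=1/p$, so the nonabelian case is the real content, and it also explains why symmetric groups require the separate Theorem~\ref{th:Symmetric}: there $A_n$ has index $2$, forcing $\sigma(S_n)>1$.

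The main work is therefore to bound $\sigma(G)$ for the nonabelian simple groups. First I would split the sum according to whether an index $n\in I(G)$ comes from a maximal subgroup or not: every index is a multiple of the index of some maximal subgroup, and a non-maximal subgroup contributes a strictly larger index, hence a smaller reciprocal. The smallest term is $1/\mu(G)$, where $\mu(G)$ is the minimal faithful permutation degree, and every term of $\sigma(G)$ is at most $1/\mu(G)$. Thus the two quantities to control are the size of $\mu(G)$ and the number of distinct small indices, both governed by the classification of finite simple groups and the known descriptions of maximal subgroups. Concretely I would combine the standard lower bounds on $\mu(G)$ (which grow across every family of Lie type, alternating, and sporadic groups) with the sparsity of small indices---few conjugacy classes of maximal subgroups, and those of index comparable to $\mu(G)$ being rare---to show that for $|G|$ large the term $1/\mu(G)$ dominates and $\sigma(G)$ is small (indeed tends to $0$).

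This leaves a finite list of small simple groups for which the asymptotic estimate is too weak; these I would verify directly, the extreme case being $A_5$, where $I(A_5)=\{5,6,10,12,15,20,30,60\}$ gives $\sigma(A_5)=43/60<1$, and similarly for $A_6$, $\PSL(2,7)$ and the other groups of small minimal degree, all of bounded order since $\mu(G)$ small forces $|G|\mid\mu(G)!$. The hard part will be obtaining a uniform, explicit upper bound on $\sigma(G)$ valid simultaneously across all families---in particular for the classical groups of small rank and the smallest alternating and sporadic groups, where several maximal subgroups of comparatively small index can push $\sigma(G)$ close to (but, the claim is, never up to) $1$. Making the transition between the generic, large-order regime handled by general structural bounds and the small regime handled by explicit computation both precise and exhaustive is where the delicate case analysis lies.
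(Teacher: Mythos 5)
Your opening reduction is exactly the paper's: your $\sigma(G)$ is $\mathcal{J}(G)-1$ in the paper's notation, and ``$\sigma(G)<1$ for every finite simple group'' is precisely Lemma~\ref{basics}(1) combined with Propositions~\ref{Sporadics}, \ref{symfbound}, \ref{Classical} and \ref{Exceptional}. The genuine gap lies in the mechanism you propose for actually bounding $\sigma(G)$. Observing that every term is at most $1/\mu(G)$ and that non-maximal subgroups contribute ``smaller reciprocals'' does not bound the \emph{sum}: the number of distinct indices can be of the order of the number of divisors of $|G|$, and reciprocal-sums over divisors are not small --- already for $A_5$ one has $\sum_{d\mid 60,\ d>1}1/d=168/60-1=1.8>1$, while the true value is $43/60$; the entire difficulty is to exploit which divisors actually occur as indices. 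The paper's device for this is concrete and you do not have it: every proper subgroup is (isomorphic to) a subgroup of some maximal subgroup $M$, whence $\sigma(G)\le\sum_{M}\bigl(1+\sigma(M)\bigr)/[G:M]$ over isomorphism-class representatives $M$ (Lemma~\ref{f2}(2)), and then Robin's theorem $\sigma(n)\le n\bigl(e^{\gamma}\ln\ln n+0.6483/\ln\ln n\bigr)$ caps $1+\sigma(M)$ by a $\ln\ln|M|$ factor (the function $\mathcal{B}$). After that, the theorem reduces to counting isomorphism classes of maximal subgroups and lower-bounding $m_1=\mu(G)$ family by family through the CFSG: Praeger--Saxl and the O'Nan--Scott analysis for $A_n$, Aschbacher classes, Liebeck's $q^{3n}$ theorem and explicit class counts for classical groups, Craven and Liebeck--Seitz for exceptional groups --- including a bound (Proposition~\ref{E8lbound}, due to Liebeck) of $166$ for the non-subfield maximal classes of $\text{E}_8(q)$, which did not exist in the literature --- plus computer verification of many small cases. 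This is the actual content of the theorem; your outline defers all of it, and since your plan coincides with the paper's strategy, completing it would essentially reproduce the paper rather than give an alternative route.

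Two further points. First, Theorem~\ref{th:Simple} concerns all simple groups, and your argument (starting from the cardinality identity $\sum_i 1/[G:H_i]=1$) presupposes $G$ finite; the paper disposes of infinite simple groups by noting that they have no proper subgroups of finite index (the normal core of a finite-index subgroup is a nontrivial finite-index normal subgroup), so no finite coset partition by proper subgroups can exist. Second, your side claim that $[S_n:A_n]=2$ forces $\sigma(S_n)>1$ is false for $n\ge 7$: Proposition~\ref{symfbound} proves $\mathcal{J}(S_n)<2$, i.e.\ $\sigma(S_n)<1$, for all $n\ge 7$, and this is exactly how Theorem~\ref{th:Symmetric} is proved. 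Symmetric groups need a separate theorem because they are not simple, not because the index-sum method fails for them; it fails only for small $n$ (e.g.\ $\sigma(S_4)=3/2$), where the paper instead cites the known verification for groups of order at most $1440$.
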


%Note that the last statement immediately reduces to finite simple groups since an infinite simple group does not have proper subgroups of finite index. \textcolor{blue}{As we now have also infinite symmetric groups, I would remove this sentence from the intro and move it to some small section containing the ``proofs of main theorems''}
%In other words, if $S$ is an infinite simple group, then $\mathcal{J}(S)=1$.

Our basic idea consists in using the fact that maximal subgroups of simple and symmetric groups are typically small, so that partitioning the group by cosets of subgroups of different order is in most cases impossible already for arithmetical reasons. Formally, let $G$ be a finite group and let $m_1, \ldots, m_k$ be the indices of the subgroups of $G$ (including $\{1\}$ and $G$), counted without multiplicity, so that $m_i \neq m_j$ for $i \neq j$. Define 
$$\mathcal{J}(G) := \sum_{i=1}^k \frac{1}{m_i}$$
and note that $\mathcal{J}(G) \geq 1$. 

If a finite group $G$ satisfies $\mathcal{J}(G) < 2$, then the Herzog-Sch\"onheim conjecture has a positive answer for $G$. Indeed, assume $G = \bigcup_{i=1}^k H_ix_i$ and $H_ix_i \cap H_jx_j = \emptyset$ for $i \neq j$ for some $H_1,\ldots,H_k \leq G$ and $x_1,\ldots,x_k \in G$. Then $|H_1| + \ldots + |H_k| = |G|$, so that 
\[\sum_{i=1}^k \frac{1}{[G:H_i]} =1. \]
Hence, if $[G:H_i] \neq [G:H_j]$ for $i\neq j$, one must have $\mathcal{J}(G) \geq 2$ as $1 = [G:G]$ is a summand in the definition of $\mathcal{J}(G)$ and $[G:H_i] \neq 1$ for all $i$.

This idea also underlies e.g. the proof of the conjecture for $A_5$ in \cite{GinosarSchnabel}. In fact, we are able to show that $\mathcal{J}(G) < 2$ holds for all finite simple groups and also $\mathcal{J}(S_n) < 2$ for $n \geq 7$. To prove this we rely on the Classification of Finite Simple Groups and the extensive literature on maximal subgroups of finite simple and symmetric groups. We also obtain an asymptotic behavior of the function for simple groups.

\begin{teor}\label{th:Asymptotic}
If $S$ denotes a simple group, then $\displaystyle \lim_{|S| \to \infty} \mathcal{J}(S) = 1$.
\end{teor}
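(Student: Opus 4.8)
The plan is to prove that $\mathcal{J}(S) \to 1$ as $|S| \to \infty$ over simple groups $S$. Since $\mathcal{J}(S) \geq 1$ always holds (the trivial index $1$ already contributes a summand $1$), it suffices to show that the "tail" $\mathcal{J}(S) - 1 = \sum_{m_i \neq 1} 1/m_i$ tends to $0$. Now every index $m_i$ of a subgroup of $S$ is at least as large as the smallest nontrivial index of $S$, namely the \emph{minimal index} $\mu(S) := \min\{[S:H] : H < S\}$, which equals the minimal degree of a faithful transitive permutation representation. The strategy is therefore to bound the tail crudely by the total number of subgroups divided by $\mu(S)$, or more efficiently by summing reciprocals of indices, and to show both that $\mu(S) \to \infty$ and that the tail shrinks despite the growing number of subgroups.

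The key structural fact I would invoke is that every proper subgroup of $S$ is contained in a maximal subgroup, so every nontrivial index is a multiple of the index of some maximal subgroup (no, rather bounded below by the smallest maximal index). I would organize the tail as
\[
\mathcal{J}(S) - 1 \;=\; \sum_{\substack{m_i \text{ index of } H < S}} \frac{1}{m_i} \;\leq\; \frac{s(S)}{\mu(S)},
\]
where $s(S)$ is the number of distinct indices of proper subgroups, which is at most the total number of conjugacy classes of subgroups. The crude bound $s(S)/\mu(S)$ is too weak because the number of subgroups can grow polynomially in $|S|$, so the real work is to control the distribution of indices. The cleaner approach is to split the sum according to the maximal subgroup containing each subgroup and to use known quantitative results: by the Classification, the minimal index $\mu(S) \to \infty$ as $|S| \to \infty$ (there are only finitely many simple groups with minimal degree below any fixed bound), and more strongly, the number of maximal subgroups and the subgroup-counting functions are controlled by results of the kind used to bound $\mathcal{J}(S) < 2$ in the main theorems.

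Concretely, I would go through the CFSG families. For alternating groups $A_n$, the minimal index is $n$ and one must show $\sum_{H<A_n} [A_n:H]^{-1} \to 0$; here the indices of maximal subgroups (point stabilizers $A_{n-1}$ of index $n$, intransitive and imprimitive subgroups, and primitive maximal subgroups which are very large index by Bochert/Praeger--Saxl bounds) are spread out, and the number of subgroups is controlled. For groups of Lie type of rank $r$ over $\mathbb{F}_q$, the minimal index grows like a power of $q$ (e.g. $(q^r-1)/(q-1)$ for $\PSL$), while the number of maximal subgroups grows only polynomially in the rank and logarithmically in $q$ by the work underlying Aschbacher's theorem and the bounds of Liebeck--Shalev; hence the tail, being at most (number of classes of subgroups)$/\mu(S)$, still tends to $0$ provided $\mu(S)$ outpaces the subgroup count. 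I expect the main obstacle to be exactly this competition: in families of unbounded rank the number of conjugacy classes of subgroups can be large, so the crude bound $s(S)/\mu(S)$ may not obviously vanish and one must instead sum reciprocals of indices weighted correctly, exploiting that most subgroups have \emph{large} index and only boundedly many small-index maximal subgroups exist. The cleanest resolution is to note that the bounds already established to prove $\mathcal{J}(S) < 2$ are in fact quantitative and make the tail an explicit function of $|S|$ tending to $0$; I would extract that uniform decay from the case analysis of the main theorem rather than reprove it, leaving only finitely many sporadic and small-rank exceptions to check by hand.
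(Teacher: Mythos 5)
Your proposal is correct and, in its closing paragraph, lands on precisely the paper's proof: the paper deduces Theorem~\ref{th:Asymptotic} directly from the quantitative inequalities already established for the main results, namely \eqref{BoundJAn} for alternating groups, \eqref{BoundJClassicalqBigger2nBig}, \eqref{14case}, \eqref{BoundJClassicalqBigger2n13} and \eqref{BoundsnSmall} (with Tables~\ref{TableGm1} and \ref{TableC}) for classical groups, and \eqref{BoundsExceptional} (with Table~\ref{TableExceptionalGroups}) for exceptional groups, each of which bounds $\mathcal{J}(S)-1$ by an explicit quantity tending to $0$, while the finitely many groups treated by direct computation are irrelevant to the limit. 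Your initial crude bound and the family-by-family discussion are the same considerations that underlie those inequalities, so the approaches coincide.
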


Note that $|G| \cdot \mathcal{J}(G)$ is precisely the sum of the subgroup orders of $G$ (counted without multiplicity). For example, if $p$ is an odd prime number and $G$ is any group of order $2p$, then $\mathcal{J}(G)= \frac{1}{2p} \left( 1+2+p+2p \right) = \frac{3(p+1)}{2p}$. An important example is the following: if $C_n$ denotes the cyclic group of order $n$, then $\mathcal{J}(C_n)=\sigma(n)/n$ where $\sigma(n)$ is the sum of the positive divisors of $n$. Moreover, for elementary abelian groups, we have $\mathcal{J}({C_p}^n) = \frac{(1/p)^{n+1}-1}{(1/p)-1}$.

Also note that $\mathcal{J}(G)$ is unbounded in general, for example if $p_1,p_2,\ldots$ are the prime numbers in increasing order and $G_n$ is the cyclic group of order $p_1 \ldots p_n$, then $\mathcal{J}(G_n)$ tends to infinity with $n$, because $\mathcal{J}(G) \geq \sum_{i=1}^n 1/p_i$ and the sum of the inverses of all the prime numbers is a divergent series. Hence, the structure of the underlying groups is really essential for our arguments.

The paper is structured as follows. In Section 2 we lay the groundwork for the other parts of the paper and verify that $\mathcal{J}(G) < 2$ holds for sporadic simple groups. In Section 3 we deal with alternating and symmetric groups, in Section 4 with simple groups of Lie type and finally in Section 5 we give the proofs of the main theorems which easily follow from the work done previously. We also include a remark on the limitations of our method.
To handle groups of small order and to estimate several functions we rely on computer calculations. For those we use GAP \cite{GAP} and Mathemtica \cite{Mathematica}. The code to verify our calculations is available in a publicly accessible GitHub repository \cite{GitRepGAP, GitRepMathematica}. 

\textbf{Thanks:} We express our gratitude to Colva Roney-Dougal, David Craven and Martin Liebeck for consultations on the maximal subgroups of simple groups of Lie type. In fact, the proof of Proposition~\ref{E8lbound} is due to Liebeck. We also thank the referee very many useful comments.

\section{Preliminaries and sporadic simple groups}

We will say that a finite group $G$ is \emph{HS}, if it satisfies the Herzog-Sch\"onheim conjecture.
In this section we develop the general tools we will use during the paper and apply them to show that the sporadic simple groups are HS.

\begin{lemma} \label{basics}
Let $G$ be a finite group and let $N \unlhd G$.
\begin{enumerate}
\item If $\mathcal{J}(G) < 2$, then $G$ is HS.
\item $\mathcal{J}(G) \leq \mathcal{J}(N) \mathcal{J}(G/N)$.
\end{enumerate}
\end{lemma}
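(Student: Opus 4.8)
The plan is to treat the two parts separately. Part (1) is essentially a restatement of the arithmetic observation made in the introduction, so I would prove it by contraposition. Assuming $G$ is not HS, fix a coset partition $G = \bigcup_{i=1}^k H_i x_i$ with all $H_i$ proper and all indices $[G:H_i]$ pairwise distinct. Counting elements yields $\sum_{i=1}^k |H_i| = |G|$, hence $\sum_{i=1}^k 1/[G:H_i] = 1$. Since the $H_i$ are proper, the values $[G:H_i]$ form a set of pairwise distinct subgroup-indices of $G$, all at least $2$; together with the summand $1/[G:G] = 1$ appearing in $\mathcal{J}(G)$, this gives $\mathcal{J}(G) \geq 1 + \sum_{i} 1/[G:H_i] = 2$, which is exactly the contrapositive.

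For part (2) the heart of the matter is factoring the index of a subgroup through $N$. For an arbitrary $H \leq G$ I would combine the tower law with the identifications $[G:HN] = [G/N : HN/N]$ (valid because $N \leq HN$) and $[HN:H] = [N : H \cap N]$ (the second isomorphism theorem) to obtain
\[ [G:H] = [G/N : HN/N]\cdot[N : H\cap N]. \]
Thus every index of a subgroup of $G$ is the product of an index of a subgroup of $G/N$ and an index of a subgroup of $N$.

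I would then define a map on distinct indices: to each distinct index $m$ of $G$ I associate a subgroup $H$ realizing it and send $m \mapsto (a,b)$, where $a = [G/N : HN/N]$ and $b = [N : H \cap N]$ are respectively an index in $G/N$ and an index in $N$. The crucial point is that this map is injective, because the displayed identity gives $m = ab$, so $m$ is recovered from its image and two distinct indices cannot collide whatever realizing subgroups are chosen. Summing $1/m = 1/(ab)$ over the distinct values $m$ and then enlarging the index set to the full product of the two families of distinct indices yields
\[ \mathcal{J}(G) = \sum_m \frac{1}{m} = \sum_m \frac{1}{ab} \leq \Bigl(\sum_a \frac{1}{a}\Bigr)\Bigl(\sum_b \frac{1}{b}\Bigr) = \mathcal{J}(N)\,\mathcal{J}(G/N). \]

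I expect the factorization and part (1) to be routine, so the only real care needed is in the injectivity bookkeeping: because $\mathcal{J}$ sums over indices \emph{without multiplicity}, one must verify that distinct $G$-indices map to distinct pairs (guaranteed by $m = ab$) and that the target product set legitimately bounds the sum. This is the main, though modest, obstacle.
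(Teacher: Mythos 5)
Your proposal is correct and follows essentially the same route as the paper: part (1) is the arithmetic counting argument ($\sum_i 1/[G:H_i]=1$ forces $\mathcal{J}(G)\geq 2$ when all indices are distinct), and part (2) rests on the same factorization $[G:H]=[G/N:HN/N]\cdot[N:H\cap N]$ coming from the second isomorphism theorem, so that each distinct index of $G$ appears as a distinct summand $1/(ab)$ in the expansion of $\mathcal{J}(N)\mathcal{J}(G/N)$. Your explicit injectivity bookkeeping is exactly the point the paper leaves implicit, and it is handled correctly.
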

\begin{proof}
(1) Assume $H_1$, $\ldots$, $H_k \leq G$, where $k \geq 2$ and $x_1,\ldots,x_k \in G$ are such that $G = \bigcup_{i=1}^k H_ix_i$ and $H_ix_i \cap H_jx_j = \emptyset$ for $i \neq j$. Set $n_i = [G:H_i]$, so $n_i \neq 1$ for all $i$. Assume that $n_i \neq n_j$ for all $i \neq j$, so that $G$ is not HS. Then, $1 = \sum_{i=1}^k 1/n_i \leq \mathcal{J}(G)-1 < 1$, a contradiction.

(2) If $H \leq G$, then $|H| = |HN/N| \cdot |H \cap N|$. This implies that every index of a subgroup of $G$ is the product between an index of a subgroup of $N$ and an index of a subgroup of $G/N$, so its inverse appears as a summand in $\mathcal{J}(N)\mathcal{J}(G/N)$. The claim follows.
\end{proof}

Let $\gamma$ be the Euler-Mascheroni constant, so that $1.78 < e^\gamma < 1.79$. We write $\ln$ for the natural logarithm. Consider the following function, which will be crucial in our work. It is defined on $\{1,2\} \cup [3,\infty)$.
\begin{equation} \label{defh}
\mathcal{B}(x) := \left\{ \begin{array}{ll} e^{\gamma} \ln\ln x + \frac{0.6483}{\ln\ln x} & \mbox{if } x \in [3,\infty), \\
1 & \mbox{if } x=1, \\
3/2 & \mbox{if } x=2.
\end{array} \right.
\end{equation}
For $x > 3$ we have $B'(x) = \frac{e^{\gamma} (\ln\ln x)^2-0.6483}{x \ln x (\ln \ln x)^2}$, so $B(x)$ is increasing when $(\ln \ln x)^2 \geq 0.6483/e^{\gamma}$, equivalently $x \geq \exp(\exp(\sqrt{0.6483/e^{\gamma}}))$, which is a number strictly between $6.22$ and $6.23$. So $B(x)$ is increasing for $x \geq 6.23$ \cite{GitRepMathematica}.

Let $\sigma(n) := \sum_{d \mid n} d = n \cdot \mathcal{J}(C_n)$. The bound $\sigma(n) \leq n \cdot \mathcal{B}(n)$ holds for all $n \in \mathbb{N}$. This is trivial for $n \in \{1,2\}$ and it follows from \cite[Th\'eor\`eme 2]{Robin84} otherwise.  We will use it several times without further mention. The reference \cite[Th\'eor\`eme 2]{Robin84} is also the reason for our definition of the function $\mathcal{B}$, as it provides an upper bound for $\sigma(n)/n$.

\begin{lemma} \label{f2}
Let $G$ be a finite group.
\begin{enumerate}
\item $\mathcal{J}(G) \leq \mathcal{B}(|G|)$.
\item Let $G$ be a finite group, let $\mathcal{M}$ be a set of representatives of the maximal subgroups of $G$ up to isomorphism and let $M_1,\ldots,M_k \in \mathcal{M}$ with $M_i \neq M_j$ for all $i \neq j$. Let $\mathcal{O} = \{|M|\ |\ M \in \mathcal{M} \setminus \{M_1,\ldots,M_k\}\}$. Then
\begin{equation}
\mathcal{J}(G) \leq 1+\sum_{i=1}^k \frac{\mathcal{J}(M_i)}{[G:M_i]} + \sum_{m \in \mathcal{O}} \frac{\mathcal{B}(m)}{|G|/m}.
\end{equation}
\end{enumerate}
\end{lemma}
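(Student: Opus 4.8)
The plan is to translate everything into statements about the set of distinct subgroup orders of $G$. Writing $d_1 < \cdots < d_r$ for the distinct orders of subgroups of $G$, the distinct indices are exactly the $|G|/d_j$, so $\mathcal{J}(G) = \frac{1}{|G|}\sum_{j=1}^r d_j$; equivalently $|G|\cdot\mathcal{J}(G)$ is the sum of the distinct subgroup orders, as already noted in the introduction. For part (1) I would simply observe that, by Lagrange's theorem, every subgroup order divides $|G|$, so $\{d_1,\ldots,d_r\}$ is a subset of the set of divisors of $|G|$. Since all summands are positive, $\sum_j d_j \le \sum_{d\mid |G|} d = \sigma(|G|)$, and dividing by $|G|$ together with the cited bound $\sigma(n) \le n\,\mathcal{B}(n)$ gives $\mathcal{J}(G) \le \sigma(|G|)/|G| \le \mathcal{B}(|G|)$.

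For part (2) the key structural input is that in a finite group every proper subgroup is contained in a maximal one. Thus if $H < G$ is proper, then $H$ lies in some maximal subgroup $M'$, which is isomorphic to exactly one representative $M \in \mathcal{M}$; consequently $H$ is isomorphic to a subgroup of $M$, so $|H|$ occurs as a subgroup order of $M$. Denoting by $\mathcal{S}(M)$ the set of distinct subgroup orders of a group $M$, this shows that the set of proper subgroup orders of $G$ is contained in $\bigcup_{M\in\mathcal{M}}\mathcal{S}(M)$. Separating off the order $|G|$ itself, which contributes the leading $1$, I would then bound, using positivity of the summands,
\begin{equation*}
|G|\cdot\mathcal{J}(G) \;=\; |G| + \sum_{d \in \mathcal{S}(G)\setminus\{|G|\}} d \;\le\; |G| + \sum_{d \in \bigcup_{M\in\mathcal{M}}\mathcal{S}(M)} d.
\end{equation*}

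The next step splits $\mathcal{M}$ as $\{M_1,\ldots,M_k\}\sqcup\mathcal{R}$ and estimates the two resulting pieces of the union separately, using subadditivity of $d\mapsto d$ over a union of finite sets. For the selected representatives I keep the exact contribution $\sum_{d\in\mathcal{S}(M_i)} d = |M_i|\,\mathcal{J}(M_i)$, which after division by $|G|$ becomes $\mathcal{J}(M_i)/[G:M_i]$. For the remaining representatives I would \emph{not} sum $\mathcal{J}(M)$ over $M\in\mathcal{R}$ directly, since this could overcount when two non-isomorphic maximal subgroups share an order; instead I observe that every element of $\bigcup_{M\in\mathcal{R}}\mathcal{S}(M)$ is a divisor of some $m\in\mathcal{O}$, whence $\sum_{d\in\bigcup_{M\in\mathcal{R}}\mathcal{S}(M)} d \le \sum_{m\in\mathcal{O}}\sum_{d\mid m} d = \sum_{m\in\mathcal{O}}\sigma(m) \le \sum_{m\in\mathcal{O}} m\,\mathcal{B}(m)$. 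Dividing the whole estimate by $|G|$ and writing $m\,\mathcal{B}(m)/|G| = \mathcal{B}(m)/(|G|/m)$ produces exactly the asserted inequality.

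The only genuinely delicate point is the bookkeeping in this last step: because $\mathcal{O}$ is a set of orders, with multiplicities discarded, while $\mathcal{R}$ may contain several non-isomorphic maximal subgroups of the same order, a naive term-by-term sum over $\mathcal{R}$ would be strictly larger than $\sum_{m\in\mathcal{O}} m\,\mathcal{B}(m)$. Routing the estimate through the divisor sets $\{d : d\mid m\}$ and invoking subadditivity over the union is what makes the passage to the set $\mathcal{O}$ legitimate, and it simultaneously absorbs any coincidences of subgroup orders across different maximal subgroups. Everything else reduces to Lagrange's theorem, the elementary identity $|G|\,\mathcal{J}(G)=\sum_{d\in\mathcal{S}(G)}d$, and the bound $\sigma(n)\le n\,\mathcal{B}(n)$.
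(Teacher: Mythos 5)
Your proof is correct and takes essentially the same route as the paper's: part (1) is the identical Lagrange-plus-divisor-sum bound, and part (2) is the paper's summand-assignment argument (every proper subgroup lies in a maximal subgroup, hence its order is a subgroup order of some representative in $\mathcal{M}$; the selected $M_i$ contribute $\mathcal{J}(M_i)/[G:M_i]$ exactly, and the remaining orders are absorbed via $\sigma(m)\le m\,\mathcal{B}(m)$), merely rephrased as set containments plus subadditivity of sums over unions. Your explicit bookkeeping for coincident orders across non-isomorphic maximal subgroups is a detail the paper's terser proof leaves implicit, but the underlying argument is the same.
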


\begin{proof}
(1) Let $n=|G|$. Then 
$$\mathcal{J}(G) \leq \sum_{d \mid n} \frac{1}{d} = \frac{1}{n} \sum_{d \mid n} \frac{n}{d} = \frac{\sigma(n)}{n} \leq \mathcal{B}(n).$$

(2) Let $h$ be an index of a subgroup $H$ of $G$.  If $H$ is isomorphic to a subgroup of one of the groups $M_1$,...,$M_k$, say to a subgroup of $M_i$, then $1/h$ appears as a summand in $\mathcal{J}(M_i)/[G:M_i]$. Otherwise, $1/h$ is a summand of $\sum_{d\mid m} d/|G| = \sigma(m)/|G|$ for some $m \in \mathcal{O}$. The claim follows from $\sigma(m) \leq m \mathcal{B}(m)$.
\end{proof}

Let $n$ be a natural number. Let $m_1 \leq \ldots \leq m_n$ be the $n$ smallest indices of maximal subgroups of $G$ up to isomorphism and $\ell$ the number of isomorphism classes of maximal subgroups of $G$. Let
\[\mathcal{B}_n(G) := (e^\gamma+1) \cdot \left( \sum_{i=1}^{n-1} \frac{\ln\ln(|G|/m_i)}{m_i} + \frac{(\ell-n+1)\ln\ln(|G|/m_n)}{m_n} \right)\]
In particular,
$$\mathcal{B}_1(G) = (\ell/m_1) \cdot (e^{\gamma}+1) \cdot \ln\ln(|G|/m_1).$$

\begin{lemma}\label{lem:GeneralSimplification}
Assume $n$ is a positive integer and $G$ is a finite group such that each maximal subgroup of $G$ has order at least $7$. With the above notation, if $\mathcal{B}_n(G) < 1$, then $\mathcal{J}(G) < 2$. 
\end{lemma}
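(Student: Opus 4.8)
The plan is to produce the chain $\mathcal{J}(G) - 1 \le \mathcal{B}_n(G) < 1$, so that the hypothesis $\mathcal{B}_n(G) < 1$ immediately yields $\mathcal{J}(G) < 2$. First I would set up the basic estimate at the level of $\sigma$, following the argument in the proof of Lemma~\ref{f2}(2) but with no distinguished maximal subgroups. Every proper subgroup $H$ of $G$ lies in some maximal subgroup $M$, so $|H|$ divides $|M|$ and the reciprocal index $1/[G:H] = |H|/|G|$ occurs as a summand of $\sigma(|M|)/|G|$. Summing over a set $\mathcal{M}$ of representatives of the isomorphism classes of maximal subgroups (which over-counts, but still dominates, since all summands are positive) gives
\[
\mathcal{J}(G) \le 1 + \sum_{M \in \mathcal{M}} \frac{\sigma(|M|)}{|G|}.
\]

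The crucial number-theoretic input is the inequality $\sigma(m) \le (e^\gamma+1)\,m\ln\ln m$, valid for every integer $m \ge 7$; this is exactly where the assumption that each maximal subgroup has order at least $7$ enters. For $m \ge 10$ it follows from $\sigma(m) \le m\mathcal{B}(m)$, because $(\ln\ln m)^2 \ge (\ln\ln 10)^2 > 0.6483$ forces $\tfrac{0.6483}{\ln\ln m} \le \ln\ln m$; the three remaining values $m \in \{7,8,9\}$ I would check directly. I expect this to be the main subtlety: the cruder pointwise bound $\mathcal{B}(m) \le (e^\gamma+1)\ln\ln m$ actually \emph{fails} for $m \in \{7,8,9\}$, so one cannot simply quote Lemma~\ref{f2}(2) verbatim — the estimate must be carried out at the level of $\sigma(m)$, where the slack in Robin's bound for small $m$ rescues the inequality. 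It is worth recording that the bound genuinely fails at $m = 6$ (there $\sigma(6)/6 = 2 > (e^\gamma+1)\ln\ln 6$), which is precisely why the threshold is $7$ and not smaller.

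Substituting this bound and writing each index as $[G:M] = |G|/|M|$, so that $\ln\ln|M| = \ln\ln(|G|/[G:M])$, yields
\[
\mathcal{J}(G) - 1 \le (e^\gamma+1) \sum_{M \in \mathcal{M}} \frac{\ln\ln|M|}{[G:M]} = (e^\gamma+1) \sum_{i=1}^{\ell} \frac{\ln\ln(|G|/m_i)}{m_i},
\]
where $m_1 \le \cdots \le m_\ell$ are the indices of the $\ell$ isomorphism classes of maximal subgroups, listed in increasing order.

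Finally I would bound the tail. For each $i \ge n$ we have $m_i \ge m_n$ and $|G|/m_i \le |G|/m_n$, and since $\ln\ln$ is increasing on $[7,\infty)$ (every maximal subgroup having order at least $7 > e$), each of the $\ell - n + 1$ terms with $n \le i \le \ell$ is at most $\ln\ln(|G|/m_n)/m_n$. Collapsing these into a single summand transforms the right-hand side above into exactly the expression defining $\mathcal{B}_n(G)$, so that $\mathcal{J}(G) - 1 \le \mathcal{B}_n(G) < 1$, which is the assertion. (The extreme case $n = \ell$ is covered, since then the tail consists of the single term $i = \ell$, and $n \le \ell$ is implicit in the definition of the $n$ smallest indices.)
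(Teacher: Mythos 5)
Your proof is correct, and its skeleton is the paper's: bound $\mathcal{J}(G)-1$ by a sum over isomorphism classes of maximal subgroups, replace each term by $(e^\gamma+1)\ln\ln|M|/[G:M]$, and collapse the tail (all classes beyond the $n-1$ smallest indices) into the lumped term of $\mathcal{B}_n(G)$. The genuine difference is where the hypothesis that maximal subgroups have order at least $7$ is consumed. The paper applies Lemma~\ref{f2}(2) with every representative distinguished (so $\mathcal{O}=\emptyset$), which requires $\mathcal{J}(M)\le(e^\gamma+1)\ln\ln|M|$ for each maximal $M$; this follows from Robin's bound when $|M|\ge 16$ (where $\ln\ln|M|\ge 1$), but for $7\le|M|\le 15$ the paper resorts to a computer inspection of all groups of those orders. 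You instead run the containment argument at the level of divisor sums (in effect Lemma~\ref{f2}(2) with $k=0$, summing over classes rather than over the set of orders, which only enlarges the bound since all terms are positive), so you only need the integer inequality $\sigma(m)\le(e^\gamma+1)\,m\ln\ln m$ for $m\ge 7$: Robin's bound gives it for $m\ge 10$ via your sharper criterion $(\ln\ln m)^2\ge 0.6483$ (the paper's cruder criterion $\ln\ln m \ge 1$ forces the threshold $16$), and $m=7,8,9$ are three trivial arithmetic checks ($\sigma=8,15,13$). This makes your proof computer-free, and your observation that the inequality fails at $m=6$ (where $\sigma(6)/6=2$) correctly isolates why the threshold is $7$; it is the arithmetic shadow of the paper's Lemma~\ref{maxs3}, which exists precisely to exclude the $S_3 < A_5$ configuration. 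What the paper's formulation buys in exchange is the sharper intermediate quantity $\mathcal{J}(M)\le\sigma(|M|)/|M|$, which is the form it reuses elsewhere (e.g.\ in Lemma~\ref{LemmaImprovedL}, where $\mathcal{J}(M_1)$ of a largest maximal subgroup is kept as such); for the present lemma that extra sharpness is irrelevant, and both routes end identically with $\mathcal{J}(G)-1\le\mathcal{B}_n(G)<1$.
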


\begin{proof}
Let $M$ be a maximal subgroup of $G$. If $|M| \geq 16$, then $\mathcal{J}(M) \leq \mathcal{B}(|M|) \leq (e^{\gamma}+1) \ln\ln |M|$ simply because $\ln\ln |M| \geq 1$, as this is equivalent to $|M| \geq e^e$ and $15 < e^e < 16$. If $7 \leq |M| \leq 15$, then $\mathcal{J}(M) \leq (e^{\gamma}+1) \ln \ln |M|$ by inspection \cite{GitRepGAP, GitRepMathematica}.  So, by Lemma~\ref{f2},
$$\mathcal{J}(G)-1 \leq \sum_M \frac{\mathcal{J}(M)}{[G:M]} \leq (e^{\gamma}+1) \sum_{M} \frac{\ln\ln |M|}{[G:M]} \leq \mathcal{B}_n(G) < 1,$$ 
where the sum is over representatives of maximal subgroups of $G$ up to isomorphism, the penultimate inequality follows from the definition of $\mathcal{B}_n$ and the last inequality holds by assumption. So, the result follows.
%from Lemma~\ref{f2}.
\end{proof}

To apply this lemma for simple groups, we will need the following observation.

\begin{lemma} \label{maxs3}
Let $G$ be a finite simple group with a maximal subgroup $M$ such that $|M| < 8$. Then $G \cong A_5$ and $M \cong S_3$.
\end{lemma}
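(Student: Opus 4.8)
The plan is to show that the hypotheses force $M$ to have order $6$ and to be isomorphic to $S_3$, and that this configuration already pins down $G$. Throughout I may assume $G$ is nonabelian, since the only abelian simple groups are the $C_p$, whose unique maximal subgroup is trivial and for which the conclusion does not apply. Because $M$ is maximal and $G$ is simple, the core of $M$ is trivial, so $G$ acts faithfully and primitively on the $[G:M]$ cosets of $M$; moreover $M$ is \emph{self-normalizing}, as $M \le N_G(M) \le G$ and $N_G(M)=G$ would make $M$ normal. I record $N_G(M)=M$ as the main structural tool.

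First I would dispose of the cases where $M$ is a $p$-group, i.e. $|M| \in \{2,3,4,5,7\}$ (the possible groups $C_2,C_3,C_4,V_4,C_5,C_7$ all being abelian). If $M$ were not a Sylow $p$-subgroup of $G$, it would be properly contained in a Sylow $p$-subgroup $Q$, and since proper subgroups of a finite $p$-group are properly contained in their normalizers, I would get $M < N_Q(M) \le N_G(M) = M$, a contradiction. Hence $M$ is a Sylow $p$-subgroup; being abelian and self-normalizing it satisfies $N_G(M)=C_G(M)=M$, so Burnside's normal $p$-complement theorem produces a normal $p$-complement, contradicting simplicity. This eliminates every prime-power order and leaves $|M|=6$, so $M \cong C_6$ or $M \cong S_3$.

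Next I would eliminate $M \cong C_6 = \langle x\rangle$. Here $t:=x^3$ is an involution with $\langle t\rangle \unlhd M$, so $M \le N_G(\langle t\rangle)$ and maximality forces $N_G(\langle t\rangle)=M$ (it cannot be $G$). Since $\Aut(C_2)$ is trivial, $N_G(\langle t\rangle)=C_G(t)$, whence $C_G(t)=C_6$. If $Q$ is a Sylow $2$-subgroup containing $t$, then $C_Q(t)\le C_G(t)$ has $2$-part $\langle t\rangle$, so $C_Q(t)=\langle t\rangle$ has order $2$; but in any $2$-group of order $\ge 4$ every element has centralizer of order $\ge 4$ (as $\langle x,Z(Q)\rangle$ properly contains $Z(Q)$), so $|Q|=2$. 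A simple group cannot have cyclic Sylow $2$-subgroup, a contradiction. Thus $M \cong S_3$. Writing $C_3=O_3(M)\unlhd M$ gives $M\le N_G(C_3)$, so maximality yields $N_G(C_3)=M=S_3$, and a normalizer-growth argument exactly as above shows that $3$ divides $|G|$ only once, so $C_3$ is a \emph{self-centralizing} Sylow $3$-subgroup of $G$.

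The remaining task, and the main obstacle, is to deduce $G\cong A_5$ from the configuration ``$C_3$ is a self-centralizing Sylow $3$-subgroup with $N_G(C_3)=S_3$''. This is genuinely realized by $A_5=\PSL(2,5)$, but also, for instance, by the \emph{non-maximal} copy of $S_3$ inside $\PSL(2,7)$, which still satisfies $N_G(C_3)=S_3$; it is precisely the \emph{maximality} of $S_3$ that must be exploited to exclude all larger groups, and this is the hard point. The elementary consequences one extracts---that $G$ has no element of order $6$ (the cube of such would lie in $C_G(y)=C_3$ for $y$ of order $3$), that the $|G|/3$ elements of order $3$ form a single conjugacy class, and that in the primitive action on $[G:M]$ points the nontrivial subdegrees lie in $\{3,6\}$ (a subdegree $2$ would force $M\cap M^g=C_3$ and hence $M^g=M$)---do not by themselves bound $|G|$. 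To finish I would either invoke the Brauer--Fowler theorem to bound $|G|$ in terms of an involution centralizer and then check the resulting finite list of simple groups directly, or appeal to the Classification of Finite Simple Groups and the known description of their maximal subgroups, from which $A_5$ emerges as the unique nonabelian simple group admitting a maximal subgroup of order less than $8$.
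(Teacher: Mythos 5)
Your first stage is correct, and it runs along essentially the same lines as the paper's, with elementary substitutes for the one citation the paper uses there: where the paper invokes Herstein's theorem (a nonsolvable group has no abelian maximal subgroup) to conclude $M \cong S_3$ immediately, you eliminate the prime-power orders via Burnside's normal $p$-complement theorem and the order $C_6$ via a cyclic Sylow $2$-subgroup, which is fine. Your normalizer-growth argument showing that the $C_3$ inside $M\cong S_3$ is a full Sylow $3$-subgroup of $G$ with $N_G(C_3)=M$ is literally the paper's argument, and both proofs then extract the same key fact: $G$ has no element of order $6$.

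The genuine gap is the final step, which you yourself flag as ``the hard point'', and neither of your two proposed endgames closes it. Brauer--Fowler bounds $|G|$ in terms of the order of an involution centralizer, but your configuration yields no such bound: all you know about $C_G(t)$ for the involution $t\in M$ is that it is a $3'$-group (an element of order $3$ commuting with $t$ would give an element of order $6$), and this is unbounded --- for instance $\Sz(q)$ has no elements of order $6$ at all, yet its involution centralizers have order $q^2$. Your second suggestion, to ``appeal to CFSG and the known description of maximal subgroups, from which $A_5$ emerges as the unique simple group with a maximal subgroup of order less than $8$'', begs the question: there is no complete list of maximal subgroups of all finite simple groups to inspect (for $\text{E}_8(q)$ and for high-rank classical groups only partial information exists; the paper's own Proposition~\ref{E8lbound} can only count isomorphism types), so the claimed ``emergence'' of $A_5$ is exactly the content of the lemma, not an available fact. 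The missing ingredient, which the paper supplies, is Podufalov's classification \cite{podufalov} of finite simple groups with no element of order $6$: such a group is one of $\PSL(2,q)$, $\PSL(3,2^n)$, $\PSU(3,2^n)$, $\Sz(2^n)$. This converts your (correct) order-$6$ observation into an explicit short list of low-rank families whose maximal subgroups are completely tabulated \cite{BHR}, and only then does the final inspection become a legitimate finite check.
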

\begin{proof}
By Herstein's theorem \cite[Theorem 5.53]{machi}, a finite nonsolvable group cannot have abelian maximal subgroups, so $M$ is nonabelian. It follows that $M \cong S_3$. Let $P$ be a Sylow $3$-subgroup of $G$ containing the Sylow $3$-subgroup $Q$ of $M$. Assume that $P \neq Q$. Since $P$ is a $3$-group, there exists $x \in P \setminus Q$ which normalizes $Q$. By the maximality of $M$, $Q$ is normal in $\langle M,x \rangle = G$, contradicting the fact that $G$ is simple. So $P=Q$, in other words, $P$ is contained in $M$. The normalizer $N_G(P)$ contains $M$ so it is equal to $M$ by maximality. It follows that $G$ does not have elements of order $6$. Using \cite{podufalov} we deduce that $G$ is one of the following: $\PSL(2,q)$, $\PSL(3,2^n)$, $\PSU(3,2^n)$, $\Sz(2^n)$. Inspection using \cite[Tables 8.1, 8.2, 8.3, 8.4, 8.5, 8.6, 8.16]{BHR} gives the result.
\end{proof}

\begin{cor}\label{cor:SimplificationSimpleGroups}
Let $G$ be a finite simple group and $n$ a natural number. If $\mathcal{B}_n(G) < 1$, then $ \mathcal{J}(G) < 2$.    
\end{cor}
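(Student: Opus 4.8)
The plan is to deduce the corollary directly from Lemma~\ref{lem:GeneralSimplification}, whose only extra hypothesis beyond the present statement is that every maximal subgroup of $G$ has order at least $7$. The entire task therefore reduces to checking this order condition for finite simple groups, and Lemma~\ref{maxs3} is tailor-made for exactly this purpose: it pins down the unique simple group admitting an unusually small maximal subgroup.

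I would split into two cases according to whether $G \cong A_5$. If $G \not\cong A_5$, then by the contrapositive of Lemma~\ref{maxs3} no maximal subgroup of $G$ can have order less than $8$; in particular every maximal subgroup has order at least $7$, so the hypotheses of Lemma~\ref{lem:GeneralSimplification} are met, and the assumption $\mathcal{B}_n(G) < 1$ immediately yields $\mathcal{J}(G) < 2$. This disposes of all finite simple groups but one.

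The remaining case $G \cong A_5$ must be handled by hand, since $A_5$ possesses the maximal subgroup $S_3$ of order $6 < 7$ and so falls just outside the scope of Lemma~\ref{lem:GeneralSimplification}. Here I would simply enumerate the distinct subgroup indices of $A_5$, namely $1,5,6,10,12,15,20,30,60$, and compute $\mathcal{J}(A_5) = \sum_i 1/m_i = 103/60 < 2$ directly, independently of the value of $\mathcal{B}_n(A_5)$ (which is not needed in this case).

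The proof is essentially immediate once the preparatory results are in place; the only point requiring care is the mismatch between the threshold $8$ appearing in Lemma~\ref{maxs3} and the threshold $7$ required by Lemma~\ref{lem:GeneralSimplification}, which is precisely why $A_5$, and only $A_5$, needs separate treatment. All the genuine difficulty has already been absorbed into the earlier results, namely the Robin bound underlying $\mathcal{B}$ and the classification of nonsolvable groups without elements of order $6$ invoked inside Lemma~\ref{maxs3}, so no further obstacle arises at this stage.
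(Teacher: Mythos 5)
Your proposal is correct and follows essentially the same route as the paper: handle $A_5$ by the explicit computation $\mathcal{J}(A_5)=103/60<2$, use Lemma~\ref{maxs3} to guarantee that every maximal subgroup of any other finite simple group has order at least $8$, and then conclude via Lemma~\ref{lem:GeneralSimplification}. The only cosmetic difference is that you enumerate the subgroup indices of $A_5$ by hand where the paper cites a GAP computation.
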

\begin{proof}
We have $\mathcal{J}(A_5)=103/60 < 2$ by \cite{GitRepGAP}. So, by Lemma \ref{maxs3}, we may assume that every maximal subgroup of $G$ has order at least $8$. The result follows from Lemma~\ref{lem:GeneralSimplification}.
\end{proof}

We will also use the following version of Stirling's inequalities (see \cite{Robbins}).

\begin{lemma}\label{Stirling}
For every positive integer $m$ we have
$$\sqrt{2 \pi m} (m/e)^m e^{\frac{1}{12m+1}} < m! < \sqrt{2 \pi m} (m/e)^m e^{\frac{1}{12m}}.$$
\end{lemma}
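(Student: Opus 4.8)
The plan is to reprove this via Robbins' elementary argument by tracking the sequence
$$d_m := \ln(m!) - \left(m + \tfrac12\right)\ln m + m,$$
which is exactly $\ln\bigl(m!/(\sqrt{m}\,(m/e)^m)\bigr)$. Indeed, dividing the claimed inequalities through by $\sqrt{m}\,(m/e)^m$ and taking logarithms shows that they are equivalent to
$$\tfrac12\ln(2\pi) + \frac{1}{12m+1} < d_m < \tfrac12\ln(2\pi) + \frac{1}{12m}.$$
First I would compute the consecutive difference. Writing $\ln\frac{m+1}{m} = \ln\frac{1+t}{1-t}$ with $t = \frac{1}{2m+1}$ and expanding $\ln\frac{1+t}{1-t} = 2\sum_{k\ge0} t^{2k+1}/(2k+1)$, a short manipulation gives
$$d_m - d_{m+1} = \left(m+\tfrac12\right)\ln\frac{m+1}{m} - 1 = \sum_{k\ge1} \frac{1}{(2k+1)(2m+1)^{2k}},$$
a series of strictly positive terms, so $(d_m)$ is strictly decreasing.

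Next I would sandwich $d_m - d_{m+1}$ between the two telescoping expressions $\frac{1}{12m} - \frac{1}{12(m+1)}$ and $\frac{1}{12m+1} - \frac{1}{12(m+1)+1}$. For the upper comparison, bounding each coefficient by $\frac{1}{2k+1} \le \frac13$ (valid for $k \ge 1$) produces a geometric series summing to $\frac{1}{3((2m+1)^2 - 1)} = \frac{1}{12m} - \frac{1}{12(m+1)}$, whence $d_m - \frac{1}{12m}$ is increasing. For the lower comparison it suffices that the leading term alone dominate the target, namely the elementary inequality
$$\frac{1}{3(2m+1)^2} > \frac{12}{(12m+1)(12m+13)} = \frac{1}{12m+1} - \frac{1}{12(m+1)+1},$$
which after cross-multiplication reduces to $24m - 23 > 0$ and so holds for every $m \ge 1$; this shows $d_m - \frac{1}{12m+1}$ is decreasing. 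Consequently both $d_m - \frac{1}{12m}$ and $d_m - \frac{1}{12m+1}$ are monotone and bounded, so $(d_m)$ converges to a finite limit $L$, and passing to the limit in the two monotonicities places $d_m$ strictly between $L + \frac{1}{12m+1}$ and $L + \frac{1}{12m}$.

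The remaining and genuinely nontrivial step is to identify $L = \tfrac12\ln(2\pi)$; this is where the constant $\sqrt{2\pi}$ must enter, and it \emph{cannot} be produced by the monotonicity estimates alone. I would evaluate it by the Wallis product in the form $\frac{\pi}{2} = \lim_{m\to\infty}\frac{(2^m m!)^4}{((2m)!)^2(2m+1)}$. Feeding the already-established asymptotic $m! \sim e^{L}\sqrt{m}(m/e)^m$ (and its analogue at $2m$) into this identity, the factorial powers cancel and the quotient tends to $e^{2L}/4$, forcing $e^{2L} = 2\pi$ and hence $L = \tfrac12\ln(2\pi)$. Substituting this value into the two-sided bound of the previous paragraph yields exactly the claimed inequalities. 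The main obstacle is thus not the estimation, which is routine once the series for $d_m - d_{m+1}$ is in hand, but the evaluation of the limiting constant, for which an independent computation such as Wallis' is indispensable.
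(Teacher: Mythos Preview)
Your argument is correct and is precisely Robbins' original proof: the paper does not give its own proof of this lemma but simply cites \cite{Robbins}, so there is nothing further to compare.
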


These basic ideas and the information available in the ATLAS \cite{ATLAS} is already enough to confirm the conjecture for sporadic simple groups.

\begin{prop}\label{Sporadics}
    If $G$ is a sporadic simple group or the Tits group ${}^2F_4(2)'$, then $\mathcal{J}(G) < 2$. In particular, $G$ is HS.
\end{prop}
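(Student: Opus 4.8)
The plan is to verify $\mathcal{J}(G) < 2$ for each of the twenty-seven groups in the statement (the twenty-six sporadic groups together with the Tits group ${}^2F_4(2)'$), from which HS follows by Lemma~\ref{basics}(1). For every such $G$ the complete list of conjugacy classes of maximal subgroups, together with their orders and indices, is recorded in the ATLAS~\cite{ATLAS}, so all quantities entering our estimates are explicitly available. The principal tool will be Corollary~\ref{cor:SimplificationSimpleGroups}: it suffices to exhibit, for each $G$, a natural number $n$ with $\mathcal{B}_n(G) < 1$. Since $\mathcal{B}_n(G)$ is computed purely from the list of maximal-subgroup indices and the number $\ell$ of their isomorphism classes, this reduces the whole proposition to a finite arithmetic check carried out over the ATLAS data~\cite{GitRepGAP, GitRepMathematica}. (Recall that a sporadic group is never $A_5$, so by Lemma~\ref{maxs3} each of its maximal subgroups has order at least $8$, as the underlying lemmas require.)

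For the large majority of these groups the crude bound already suffices. The smallest index of a maximal subgroup of $G$ equals its minimal faithful permutation degree, and for most sporadic groups this is large; the summands $\ln\ln|M|/[G:M]$ are then uniformly tiny and $\mathcal{B}_n(G) < 1$ holds already for $n = \ell$. For the very largest sporadic groups the indices are astronomically large, so the estimate is immediate, and the Tits group (smallest index $1600$, eleven classes) is likewise settled at once. First I would dispatch all such groups by evaluating $\mathcal{B}_n(G)$ directly; a spot check of $M_{22}$ (smallest index $22$) or $J_2$ (smallest index $100$) shows the total stays comfortably below the threshold $1/(e^\gamma+1)$ before the $(e^\gamma+1)$ factor is applied.

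The obstacle lies in the few groups possessing a maximal subgroup of large order but very small index, where the worst-case estimate $\mathcal{J}(M) \le (e^\gamma+1)\ln\ln|M|$ is too lossy: concretely the small Mathieu groups $M_{11}$ and $M_{12}$. In $M_{11}$ the maximal subgroups $M_{10} \cong A_6.2$ and $\PSL(2,11)$ have index $11$ and $12$, while $M_{12}$ has two classes of maximal subgroups isomorphic to $M_{11}$ of index $12$; in both cases these top terms alone already push $\mathcal{B}_n(G)$ above $1$ for every $n$. For these I would instead apply Lemma~\ref{f2}(2): one computes $\mathcal{J}(M_i)$ exactly by GAP for the handful of maximal subgroups of smallest index (whose orders are small enough for a direct subgroup-lattice computation) and bounds every remaining isomorphism class crudely by $\mathcal{B}(|M|)/[G:M]$. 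Since the true value $\mathcal{J}(M)$ is far below the worst-case bound $\mathcal{B}(|M|)$, this replacement closes the gap and yields $\mathcal{J}(G) - 1 < 1$. Should a relevant maximal subgroup ever be too large for a direct computation of its $\mathcal{J}$-value, one can estimate it recursively through Lemma~\ref{basics}(2) or by a further application of Lemma~\ref{f2}. The genuinely delicate part is therefore only the bookkeeping that isolates these borderline groups and confirms that the sharper, computer-assisted estimate brings each of them below the threshold.
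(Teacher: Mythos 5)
Your overall strategy coincides with the paper's: apply Corollary~\ref{cor:SimplificationSimpleGroups} to the ATLAS data for almost all of the groups, and handle the small Mathieu group(s) by a sharper argument. The plan would indeed yield a proof, but two of your supporting claims are wrong, and both stem from the same misreading of the framework. The quantity $\mathcal{B}_n(G)$, like the sums in Lemma~\ref{f2}(2) and Lemma~\ref{lem:GeneralSimplification}, is indexed by maximal subgroups \emph{up to isomorphism}, not up to conjugacy. Consequently the two conjugacy classes of $\text{M}_{11}$ inside $\text{M}_{12}$ contribute a single summand $\ln\ln(7920)/12$, not two, and $\text{M}_{12}$ is not an exceptional case at all: with $\ell = 8$ isomorphism types and smallest indices $12, 66, 144, \ldots$ one gets for instance $\mathcal{B}_3(\text{M}_{12}) \approx 2.79\left(\tfrac{2.19}{12} + \tfrac{1.98}{66} + 6\cdot\tfrac{1.87}{144}\right) \approx 0.81 < 1$, so the Corollary applies (the paper itself sets $n=2$ for the Mathieu groups of degree $12$, $22$, $23$, $24$). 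Your parallel numerical claim about $\text{M}_{11}$ is also imprecise: the two largest terms give only $2.79\left(\tfrac{\ln\ln 720}{11} + \tfrac{\ln\ln 660}{12}\right) \approx 0.91 < 1$, and it is the full sum over all five isomorphism types that exceeds $1$; still, your conclusion there is correct — no $\mathcal{B}_n(\text{M}_{11})$ drops below $1$, so $\text{M}_{11}$ genuinely escapes the Corollary, exactly as the paper observes. For that single group the paper does something simpler than your hybrid application of Lemma~\ref{f2}(2): since $|\text{M}_{11}| = 7920$ is tiny, it computes $\mathcal{J}(\text{M}_{11}) = \tfrac{431}{330} < 2$ outright in GAP. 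Your fallback would also work ($\mathcal{J}(\text{M}_{10})/11 + \mathcal{J}(\PSL(2,11))/12$ plus crude bounds on the remaining three classes stays well below $1$), so nothing in your proposal is fatally broken; but you should fix the conjugacy-versus-isomorphism bookkeeping, since collapsing isomorphic classes to one term is precisely what the ``up to isomorphism'' formulation of Lemma~\ref{f2} was designed to buy.
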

\begin{proof}
    For all sporadic simple groups except the Mathieu group of degree 11 we can use Corollary~\ref{cor:SimplificationSimpleGroups}, where the indices of maximal subgroups as well as the number of isomorphism types of maximal subgroups of $G$ are all contained in \cite{ATLAS} as well as \cite{WilsonSporadicSimple} for $\text{Fi}_{24}'$ and \cite{Monster} for the Monster.
    
Let $n$ be as in Corollary~\ref{cor:SimplificationSimpleGroups}. For $G$ isomorphic to the Mathieu group of degree $12$, $22$, $23$ or $24$ we can set $n=2$ and for sporadic simple groups which are not isomorphic to a Mathieu group, even $n=1$ is sufficient to obtain $\mathcal{J}(G) < 2$. The same applies to the Tits group \cite{GitRepMathematica}. For $G \cong \text{M}_{11}$ one can actually not use Corollary~\ref{cor:SimplificationSimpleGroups}, but the group is small enough that an explicit GAP calculation of $\mathcal{J}(G) = \frac{431}{330} <  2$ is possible \cite{GitRepGAP}. We collect the necessary data in Table \ref{TableSporadic}.
\end{proof}

\begin{table}
\caption{Data for sporadic groups used in Proposition \ref{Sporadics}}
\begin{tabular}{|c|p{6cm}|c|c|}
\hline
Name & Order & Minimal indices & $\ell \leq$ \\ \hline
$\text{M}_{11}$ & 7920 & 11 & 5 \\ \hline
$\text{M}_{12}$ & 95040 & 12,\ 66 & 8 \\ \hline
$\text{M}_{22}$ & 443520 & 22,\ 77 & 7 \\ \hline
$\text{M}_{23}$ & 10200960 & 23,\ 253 & 7 \\ \hline
$\text{M}_{24}$ & 244823040 & 24,\ 276 & 9 \\ \hline
$\text{J}_1$ & 175560 & 266 & 7 \\ \hline
$\text{J}_2$ & 604800 & 100 & 9 \\ \hline
HS & 44352000 & 100 & 10 \\ \hline
$\text{J}_3$ & 50232960 & 6156 & 8 \\ \hline
McL & 898128000 & 275 & 10 \\ \hline
He & 4030387200 & 2058 & 10 \\ \hline
Ru & 145926144000 & 4060 & 15 \\ \hline
Suz & 448345497600 & 1782 & 16 \\ \hline
O'N & 460815505920 & 122760 & 9 \\ \hline
$\text{Co}_3$ & 495766656000 & 276 & 14 \\ \hline
$\text{Co}_2$ & 42305421312000 & 2300 & 11 \\ \hline
$\text{Fi}_{22}$ & 64561751654400 & 3510 & 13 \\ \hline
HN & 273030912000000 & 114000 & 14 \\ \hline
Ly & 51765179004000000 & 8835156 & 9 \\ \hline
Th & 90745943887872000 & 143127000 & 16 \\ \hline
$\text{Fi}_{23}$ & 4089470473293004800 & 31671 & 14 \\ \hline
$\text{Co}_1$ & 4157776806543360000 & 98280 & 22 \\ \hline
$\text{J}_4$ & 86775571046077562880 & 173067389 & 13 \\ \hline
$\text{Fi}_{24}'$ & 1255205709190661721292800 & 306936 & 22 \\ \hline
B & 4154781481226426191177580544000000 & 13571955000 & 30 \\ \hline
M & 808017424794512875886459904961710- & 972394611420091860000 & 43 \\
& 757005754368000000000 & & \\ \hline
Tits & 17971200 & 1600 & 6 \\ \hline
\end{tabular} \label{TableSporadic}
\end{table}

\section{Symmetric and alternating groups}

Let $n$ be a natural number and denote by $A_n$ and $S_n$ the alternating and symmetric group of degree $n$, respectively. A reference for the following discussion is \cite[Theorem 4.8]{Cameron}. Let $M$ be a maximal subgroup of $A_n$. If $M$ acts intransitively on $\Omega = \{1,\ldots,n\}$, then $M$ equals the stabilizer of a proper subset $\Delta$ of $\Omega$ of size strictly less than $n/2$ and $M \cong (S_k \times S_{n-k}) \cap A_n$ where $k=|\Delta|$. In particular, $|M|=k!(n-k)!/2$. If $M$ acts transitively on $\Omega$ but imprimitively, i.e. it stabilizes a nontrivial partition of $\Omega$, then $M$ equals the stabilizer of a partition of $\Omega$ consisting of $b$ blocks of size $a$ each, where $a \cdot b = n$, and $M \cong (S_a \wr S_b) \cap A_n$. Here the wreath product $S_a \wr S_b$ equals the semidirect product $(S_a)^b \rtimes S_b$ with the permutation action on the coordinates. In particular, $|M|=a!^b b!/2$. If $M$ acts transitively and not imprimitively, then we say that it acts primitively. In this case $M$ is described by the O'Nan-Scott Theorem. However, all we need in this case is that $|M| \leq 4^n$ by \cite[Theorem]{PS}. Note that this result does not depend on CFSG. 

\begin{lemma} \label{symn3bound}
Let $n \geq 19$ and let $M$ be a maximal subgroup of $A_n$. Then either $[G:M]=\binom{n}{k}$ with $k \in \{1,2,3\}$ or $[G:M] > n^3/3$.
\end{lemma}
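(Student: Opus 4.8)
The plan is to run through the classification of maximal subgroups $M$ of $G=A_n$ recalled immediately before the statement—intransitive, transitive imprimitive, and primitive—and to treat each type separately, in each case either recovering an index of the shape $\binom{n}{k}$ with $k\in\{1,2,3\}$ or showing $[G:M]>n^3/3$.

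I would first dispose of the intransitive case, which is the one producing the exceptional indices. Here $M\cong (S_k\times S_{n-k})\cap A_n$ with $k<n/2$, so $[G:M]=\binom{n}{k}$ exactly. If $k\in\{1,2,3\}$ we are in the first alternative. If $k\ge 4$, then since $\binom{n}{k}$ is increasing for $k\le n/2$ we have $[G:M]\ge\binom{n}{4}$, and it suffices to verify $\binom{n}{4}>n^3/3$. This is the elementary inequality $(n-1)(n-2)(n-3)>8n^2$, i.e. $n^3-14n^2+11n-6>0$, which holds for all $n\ge 19$ (in fact already for $n\ge 14$), e.g. by checking the threshold and noting the cubic is increasing there.

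Next the primitive case. By the Praeger--Saxl bound $|M|\le 4^n$ recalled in the preceding discussion, so $[G:M]\ge \frac{n!}{2\cdot 4^n}$. I would feed the lower Stirling estimate of Lemma~\ref{Stirling} into this to show $\frac{n!}{2\cdot 4^n}>n^3/3$ for $n\ge 19$; since $n!/4^n$ grows super-exponentially the inequality is comfortable once past the threshold (it already holds from $n=16$ on), so only a routine estimate together with monotonicity in $n$ is required.

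The main obstacle is the imprimitive case, where $M\cong(S_a\wr S_b)\cap A_n$ with $ab=n$ and $a,b\ge 2$, so that $[G:M]=\frac{n!}{(a!)^b\,b!}$ is the number of partitions of $\{1,\dots,n\}$ into $b$ unordered blocks of size $a$ (and this case is vacuous when $n$ is prime). The difficulty is that this is a two-parameter family, so one needs a lower bound that is uniform over all factorizations $ab=n$. The guiding observation is that every such index is exponentially large in $n$: applying Lemma~\ref{Stirling} to $n!/((a!)^b b!)$ shows this quantity grows exponentially, and the extreme (smallest) case is the balanced two-block partition $b=2$, $a=n/2$, for which the subgroup order $((n/2)!)^2$ is maximal among admissible factorizations. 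In that extremal case $[G:M]=\tfrac12\binom{n}{n/2}\ge \frac{2^{n-1}}{n+1}$ (using that the central binomial coefficient is the largest of the $n+1$ entries summing to $2^n$), which already exceeds $n^3/3$ for all even $n\ge 20$; for $n$ divisible by no small prime the admissible $b$ are larger and the index only grows, so the same Stirling estimate covers the remaining factorizations. Making precise the claim that the two-block partition minimizes the index—equivalently, extracting a clean uniform exponential lower bound directly from Lemma~\ref{Stirling}—is the one step requiring genuine care, although it is not deep: all imprimitive indices overshoot the polynomial bound $n^3/3$ by an exponential margin.
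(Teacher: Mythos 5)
Your intransitive and primitive cases are correct and coincide with the paper's treatment: $\binom{n}{4}>n^3/3$ for $n\ge 14$, and the Praeger--Saxl bound $|M|\le 4^n$ together with Lemma~\ref{Stirling} disposes of primitive $M$. The gap is in the imprimitive case, which is the substantive part of the lemma. Your argument rests entirely on the claim that among all factorizations $n=ab$ with $a,b\ge 2$ the index $n!/\bigl((a!)^b\,b!\bigr)$ is minimized by the balanced two-block partition $(a,b)=(n/2,2)$, and you then verify the bound only in that extremal case. This extremality claim is never proved (you flag it yourself as ``the one step requiring genuine care''), and it is not a formality: the index is \emph{not} monotonic in $b$, so no naive comparison gives it. For instance, at $n=12$ the indices of $(S_a\wr S_b)\cap A_{12}$ for $b=2,3,4,6$ are $462,\ 5775,\ 15400,\ 10395$ --- they increase and then decrease. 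In particular your assertion for $n$ with no small prime factors, that ``the admissible $b$ are larger and the index only grows,'' is false as written; what you actually need is that the minimum over admissible factorizations sits at the smallest admissible $b$, which is exactly the statement you postponed. So as it stands the imprimitive case is asserted rather than proved.

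The paper avoids any extremality statement and instead argues uniformly in $(a,b)$: assuming $n!/\bigl((a!)^b\,b!\bigr)\le n^3/3$, it feeds the Stirling bound $m!<e\cdot m\cdot(m/e)^m$ into both $a!$ and $b!$ and extracts the single clean inequality $b^{n-1}<e\,n^{b+3}$, equivalently $(n-1)/\ln n<(b+4)/\ln b$. For $b\ge 6$ the right-hand side is increasing in $b$, so $b\le n/2$ forces $n\le 18$, a contradiction; the cases $b\in\{2,3,4,5\}$ are eliminated directly. If you want to salvage your route, the cheapest fix is of the same flavor: rather than reducing to $b=2$, prove a lower bound on the index valid for every factorization (e.g.\ via the multinomial estimate $\frac{n!}{(a!)^b}\ge b^n\big/\binom{n+b-1}{b-1}$, giving $n!/\bigl((a!)^b b!\bigr)\ge b^{n-1}/(2n)^{b-1}$) and check that it exceeds $n^3/3$ for all $2\le b\le n/2$ and $n\ge 19$. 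Either way, some uniform-in-$b$ inequality is indispensable; the single extremal case plus an appeal to monotonicity does not close the argument.
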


\begin{proof}
If $M$ acts primitively on $\{1,\ldots,n\}$, then $|M| \leq 4^n$ by \cite[Theorem]{PS}. So, if $[A_n:M] \leq n^3/3$, then $n! \leq 4^n \cdot 2 n^3/3$ which implies that $n \leq 15$, a contradiction. Indeed, if $n \geq 11$, then $n \geq 4e$ and Lemma \ref{Stirling} implies that $(n/(4e))^{11} \leq (n/(4e))^n \leq n!/4^n \leq 2n^3/3$ yielding $n \leq 25$. A case by case inspection for $16 \leq n \leq 25$ implies the result \cite{GitRepMathematica}.

If $M$ acts intransitively on $\{1,\ldots,n\}$, then $M \cong (S_k \times S_{n-k}) \cap A_n$ for some $k$ with $1 \leq k < n/2$. The binomial coefficients $[A_n:M] = \binom{n}{k}$ increase for $k=1,2,\ldots,\lfloor n/2 \rfloor$. Therefore, if $\binom{n}{k} \leq n^3/3$ for some $k$ with $4 \leq k \leq \lfloor n/2 \rfloor$, then $\binom{n}{4} \leq n^3/3$, implying $n \leq 13$, a contradiction \cite{GitRepMathematica}. 

Now assume $M$ acts transitively but imprimitively on $\{1,\ldots,n\}$. Then $M \cong (S_a \wr S_b) \cap A_n$ for some integers $a,b > 1$ such that $ab=n$. 
Assume that $[A_n:M]=n!/(a!^b b!) \leq n^3/3$. Stirling's inequality, as formulated in Lemma~\ref{Stirling}, implies 
$$m! \leq \sqrt{2 \pi m} \cdot (m/e)^m \cdot e^{\frac{1}{12m}} < e \cdot m \cdot (m/e)^m$$ 
for all $m \geq 2$. If $m \geq 7$, this follows from $m \geq 2\pi$ and $1<12m$. If $m \leq 6$, this follows by inspection \cite{GitRepMathematica}. Using this and the fact that $a=n/b$ we obtain
\begin{align*}
(n/e)^n & \leq n! \leq (a!^b b!) n^3 < ((ea)(a/e)^a)^b (eb(b/e)^b)n^3 = (a/e)^n (ea)^b (b/e)^b n^3 eb \\
&= (a/e)^n a^b b^b n^3 eb = (1/b)^n (n/e)^n n^{b+3} eb.    
\end{align*}
In other words,
\begin{equation} \label{bn-1}
b^{n-1} < n^{b+3} e.
\end{equation}
Taking natural logarithms
$$(n-1) \ln b < (b+3) \ln n + 1 < (b+4) \ln n.$$
Rearranging,
$$\frac{n-1}{\ln n} < \frac{b+4}{\ln b}$$
The right-hand side is increasing with $b$ if $b \geq 6$ \cite{GitRepMathematica}. So if this is true then, using $b \leq n/2$, we obtain
$$\frac{n-1}{\ln n} < \frac{n/2+4}{\ln(n/2)},$$
implying $n \leq 18$, a contradiction. Now assume $b \leq 5$. If $b=2$, then \eqref{bn-1} becomes $2^{n-1} < e \cdot n^5$ which implies $n \leq 25$, so $n \in \{20,22,24\}$. The inequality $n!/(a!^b b!) \leq n^3/3$ does not hold in this case. If $b=3$, then \eqref{bn-1} becomes $3^{n-1} < e \cdot n^6$ which implies $n \leq 17$. If $b=4$, then \eqref{bn-1} becomes $4^{n-1} < e \cdot n^7$ which implies $n \leq 15$. If $b=5$, then \eqref{bn-1} becomes $5^{n-1} < e \cdot n^8$ which implies $n \leq 15$. We obtain a contradiction in all cases. See \cite{GitRepMathematica} for more details.
\end{proof}

\begin{prop} \label{symfbound}
Let $n \geq 3$ be an integer.
\begin{enumerate}
    \item $\mathcal{J}(S_n) \leq 5/2$ with equality if and only if $n=4$, and $\mathcal{J}(S_n) < 2$ for all $n \geq 7$.
    \item $\mathcal{J}(A_n) \leq 11/6$ with equality if and only if $n=4$, and $\mathcal{J}(A_n) < 4/3$ for all $n \geq 9$. In particular, $A_n$ is HS.
\end{enumerate}
\end{prop}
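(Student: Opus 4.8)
The plan is to prove the bounds for $A_n$ first and then deduce those for $S_n$ from the submultiplicativity in Lemma~\ref{basics}(2): since $A_n \unlhd S_n$ with $S_n/A_n \cong C_2$ and $\mathcal{J}(C_2) = 3/2$, we get $\mathcal{J}(S_n) \leq \tfrac{3}{2}\mathcal{J}(A_n)$ for free, so that $\mathcal{J}(A_n) < 4/3$ will yield $\mathcal{J}(S_n) < 2$. The heart of the matter is therefore the estimate $\mathcal{J}(A_n) < 4/3$ for $n \geq 9$, which I would establish by induction on $n$ using the recursive bound $\mathcal{J}(A_n)-1 \leq \sum_M \mathcal{J}(M)/[A_n:M]$ (the sum over representatives of the maximal subgroups up to isomorphism, cf. Lemma~\ref{f2}(2) with $\mathcal{O}=\emptyset$). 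The base of the induction consists of the finitely many degrees $n \leq 18$, where Lemma~\ref{symn3bound} is not yet available: for $3 \leq n \leq 8$ one computes $\mathcal{J}(A_n)$ exactly to check $\mathcal{J}(A_n) \leq 11/6$ with equality only at $n=4$ (e.g.\ $\mathcal{J}(A_3)=4/3$, $\mathcal{J}(A_4)=11/6$, $\mathcal{J}(A_5)=103/60$), and for $9 \leq n \leq 18$ one verifies $\mathcal{J}(A_n) < 4/3$ from the known maximal-subgroup data together with the recursive bound, with the help of the computer.

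For the inductive step take $n \geq 19$ and apply Lemma~\ref{symn3bound} to split the maximal subgroups $M$ of $A_n$ into the three intransitive ones of indices $\binom{n}{1}=n$, $\binom{n}{2}$ and $\binom{n}{3}$, and everything else, all of whose indices exceed $n^3/3$. The point stabilizer $M \cong A_{n-1}$ contributes $\mathcal{J}(A_{n-1})/n \leq \tfrac{4}{3n}$ by the inductive hypothesis; this is the dominant term, and it is precisely the reason the induction is indispensable, since the crude estimate $\mathcal{J}(A_{n-1}) \leq \mathcal{B}((n-1)!/2) \sim e^{\gamma}\ln n$ would already overshoot $1/3$ at $n=19$. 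The two remaining small-index subgroups $(S_2 \times S_{n-2})\cap A_n$ and $(S_3 \times S_{n-3})\cap A_n$ contribute at most $\mathcal{B}((n-2)!)/\binom{n}{2}$ and $\mathcal{B}(3(n-3)!)/\binom{n}{3}$, both of order $\ln n / n^2$ or smaller.

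It then remains to bound the total contribution of the maximal subgroups of index greater than $n^3/3$, which I would organise by O'Nan--Scott type. The further intransitive subgroups (fewer than $n/2$ of them, each of index at least $\binom{n}{4}$) and the imprimitive subgroups (at most $d(n)=n^{o(1)}$ of them, $d(n)$ the number of divisors of $n$, each of index exceeding $n^3/3$) together contribute $O\!\left(d(n)\ln n / n^3\right)$, which is negligible. For the primitive subgroups I would use $|M| \leq 4^n$ (the bound of \cite{PS} already invoked in Lemma~\ref{symn3bound}), so that each has index at least $n!/(2\cdot 4^n)$ and contributes at most $\mathcal{B}(4^n)/\big(n!/(2\cdot 4^n)\big)$; summing over the primitive classes gives a bound of the form $(\#\{\text{primitive classes}\}) \cdot \tfrac{2\cdot 4^n}{n!}\,\mathcal{B}(4^n)$. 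Since $n!/4^n$ grows super-exponentially (like $(n/4e)^n$) while $\mathcal{B}(4^n)=O(\ln n)$, any subexponential bound on the number of primitive permutation groups of degree $n$ (e.g.\ $n^{O(\log n)}$) makes this term tend to $0$. The main obstacle is to keep this primitive contribution below the available budget $\tfrac{n-4}{3n}$ (minus the small non-primitive terms, which total under $0.04$) throughout the whole range $n \geq 19$, not merely asymptotically: this is automatic for large $n$, whereas for the moderate degrees it leans on the actual small counts of primitive groups of degree $n$. Collecting the estimates gives $\mathcal{J}(A_n)-1 < 1/3$, completing the induction.

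Finally I would assemble the two statements. For $A_n$: the bound $\mathcal{J}(A_n)\leq 11/6$ holds for all $n\geq 3$ (exact values for $3\leq n\leq 8$, and $\mathcal{J}(A_n)<4/3<11/6$ for $n\geq 9$) with the unique equality at $n=4$, and the HS assertion follows from $\mathcal{J}(A_n)\leq 11/6<2$ via Lemma~\ref{basics}(1). For $S_n$: the exact small-order computations give $\mathcal{J}(S_4)=5/2$ and the values $\mathcal{J}(S_n)$ for $3\leq n\leq 8$ (in particular $\mathcal{J}(S_7),\mathcal{J}(S_8)<2$), while $\mathcal{J}(S_n)\leq \tfrac{3}{2}\mathcal{J}(A_n)<2$ for $n\geq 9$; together these yield $\mathcal{J}(S_n)\leq 5/2$ with equality exactly at $n=4$ and $\mathcal{J}(S_n)<2$ for $n\geq 7$.
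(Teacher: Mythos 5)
Your overall architecture coincides with the paper's: induction on $n$ with a computational base in the small degrees, the recursive bound of Lemma~\ref{f2}(2) isolating the dominant term $\mathcal{J}(A_{n-1})/n$, the index trichotomy of Lemma~\ref{symn3bound}, and the passage from $\mathcal{J}(A_n)<4/3$ to $\mathcal{J}(S_n)<2$ via Lemma~\ref{basics}(2). Your estimates for the intransitive and imprimitive classes also match the paper's in substance. The one place where you genuinely diverge is the primitive contribution, and that is where your argument has a real gap. You bound that contribution by $(\#\{\text{primitive classes}\}) \cdot \frac{2\cdot 4^n}{n!}\,\mathcal{B}(4^n)$ and then appeal to ``any subexponential bound on the number of primitive permutation groups of degree $n$ (e.g.\ $n^{O(\log n)}$)''. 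Such a count is not among the tools developed in the paper, and, more importantly, as you state it, it is an asymptotic bound with an unspecified constant in the exponent, so it cannot certify the inequality for every $n\geq 19$. You acknowledge this yourself (``the main obstacle \dots\ for the moderate degrees it leans on the actual small counts of primitive groups of degree $n$''), but you never pin down a citable explicit statement, nor delimit a finite range to be checked by computer against a concrete threshold beyond which an explicit inequality takes over. As written, the induction step is therefore not closed: the proof is complete only modulo an unproved and unreferenced counting result.

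The paper closes exactly this hole without any counting theorem, by an order-counting trick worth noting. The representatives $\mathcal{M}$ are taken up to \emph{order}, and every primitive maximal subgroup is transitive, hence has order divisible by $n$; consequently the orders of the primitive representatives are \emph{distinct multiples of} $n$, each at most $4^n$ by \cite[Theorem]{PS}. Their sum $s$ therefore satisfies $s \leq \sum_{i \leq 4^n/n} i\,n \leq 16^n/n$, with no reference whatsoever to how many primitive classes exist. Feeding this into $\sum_M \mathcal{B}(|M|)/[A_n:M] \leq \mathcal{B}(|A_n|)\,(s+t)/|A_n|$, where $t$ accounts for the non-primitive classes (at most $n$ of them, each of index at least $\min\{n^3/3,\binom{n}{2},\binom{n}{3}\}$), yields the explicit bound $(s+t)/|A_n| \leq 10/n^2$, verified by computer for $20 \leq n \leq 43$ and analytically for $n \geq 44$. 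If you replace your class-counting step by this order-counting step --- or alternatively supply an explicit, non-asymptotic bound on the number of isomorphism classes of primitive maximal subgroups of $A_n$ valid for all $n\geq 19$, together with the verification in the moderate range --- your argument becomes a complete proof.
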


\begin{proof}
The claim is true for $n \leq 13$ by \cite{GitRepGAP}.
Note that, if $\mathcal{J}(A_n) < 4/3$, then by Lemma \ref{basics}(2), since $S_n/A_n$ is a cyclic group of order $2$, 
$$\mathcal{J}(S_n) \leq \mathcal{J}(A_n) \cdot \mathcal{J}(S_n/A_n) < \frac{4}{3} \cdot \left(1+\frac{1}{2}\right) = 2,$$
so, in order to prove the proposition, it is enough to prove that $\mathcal{J}(A_n) < 4/3$ for all $n \geq 14$. We do this by induction on $n$.

Let $\mathcal{M}$ be a set of representatives of maximal subgroups of $A_n$ up to order. The function $\mathcal{B}$ was defined in (\ref{defh}). We will use the following bounds for $\mathcal{J}(A_n)$, which follow from Lemma \ref{f2}(2) and the fact that $A_{n-1}$ is the unique subgroup of $A_n$ of index $n$, up to isomorphism:
\begin{equation} \label{Anbound}
\mathcal{J}(A_n) \leq 1+\frac{\mathcal{J}(A_{n-1})}{n} + \sum_{\substack{M \in \mathcal{M} \\ M \not \cong A_{n-1}}} \frac{\mathcal{B}(|M|)}{[A_n:M]}.
\end{equation}
We use this to bound $\mathcal{J}(A_n)$ for $n \in \{14,15,16,17,18,19\}$. The strategy is to use the value of $\mathcal{J}(A_{n-1})$ to bound $\mathcal{J}(A_n)$. We know by \cite{GitRepGAP} that $\mathcal{J}(A_{13}) < 1.11$. Using the knowledge of the maximal subgroups of $A_n$, in particular relying on the library of primitive groups in GAP, yields the claim for $n \leq 19$ \cite{GitRepGAP, GitRepMathematica}.

Now assume $n \geq 20$. Let 
$$s = \sum_{\substack{M \in \mathcal{M}, \\ M \mbox{ primitive}}} |M|, \hspace{2cm} 
t= \sum_{\substack{M \in \mathcal{M},\ M \not \cong A_{n-1} \\ M \mbox{ non-primitive}}} |M|.$$ 
Note that as each primitive subgroup acts transitively, its order is divisible by $n$. Let $v$ be the maximal order of a proper primitive subgroup of $A_n$. Then
\begin{equation} \label{sbound}
s \leq \sum_{i \leq v/n} i n \leq n (v/n)^2 = v^2/n.
\end{equation}
The number of conjugacy classes of non-primitive maximal subgroups of $A_n$ is at most
\begin{equation} \label{tbound}
\left| \{k \in \{1,\ldots,n\}\ |\ k < n/2\} \right| + \left| \{a \geq 2\ |\ a\mid n,\ a \neq n\} \right| \leq n/2 + 2\sqrt{n} < n,
\end{equation}
where the last inequality follows from $n \geq 20$.

By Lemma \ref{symn3bound}, which we can use as $n \geq 19$, and as $A_n$ has at most $n$ conjugacy classes of non-primitive maximal subgroups (by \eqref{tbound}) we obtain
$$t \leq w_n := |A_n| \cdot \left( \frac{n}{n^3/3} + \frac{1}{\binom{n}{2}} + \frac{1}{\binom{n}{3}}\right).$$
We claim that 
\begin{equation} \label{s+tbound}
\frac{s+t}{|A_n|} \leq \frac{s+w_n}{|A_n|} \leq \frac{10}{n^2}
\end{equation}
for all $n \geq 20$. This can be checked by GAP if $n \leq 43$ \cite{GitRepGAP}. Now assume that $n \geq 44$, so that $n \geq 16e$. We use $v \leq 4^n$ \cite[Theorem]{PS}, which implies $s \leq 16^n/n$ (by \eqref{sbound}). If $44 \leq n \leq 52$ then the bound $t \leq w_n$ implies \eqref{s+tbound} \cite{GitRepMathematica}. Now assume $n \geq 53$. Since $n! \geq (n/e)^n$, we have $16^n/n! \leq (16e/n)^n \leq (16e/n)^{44}$ and this is less than $1/n^2$ for $n \geq 53$ \cite{GitRepMathematica}. We deduce that $s/|A_n| \leq (2/n)(16e/n)^{44} \leq 2/n^3$  and hence
\begin{align*}
\frac{s+t}{|A_n|} & \leq \frac{2}{n^3} + \frac{3}{n^2} + \frac{2}{n(n-1)} + \frac{6}{n(n-1)(n-2)}
\end{align*}
This is less than $10/n^2$ \cite{GitRepMathematica}. The claim \eqref{s+tbound} follows.

Moreover, we observe that, since $\ln(\ln(n)) \leq \frac{1}{2} \ln(n)$,
\begin{equation} \label{hn!bound}
\mathcal{B}(n!/2) \leq e^{\gamma} \ln \ln (n^n) + 0.7 = e^{\gamma} \ln(n \ln(n)) + 0.7 \leq \frac{3}{2} e^{\gamma} \ln(n) + 0.7.
\end{equation}

By induction, $\mathcal{J}(A_{n-1}) < 4/3$. Using first \eqref{Anbound} and then \eqref{s+tbound} and \eqref{hn!bound}, we have (see \cite{GitRepMathematica})
\begin{align}\label{BoundJAn}
\mathcal{J}(A_n) & \leq 1 + \frac{\mathcal{J}(A_{n-1})}{[A_n:A_{n-1}]} + \frac{\mathcal{B}(|A_n|)}{|A_n|} \cdot (s+t) \\ 
& < 1+ \frac{4}{3n} + \left( \frac{3}{2} e^{\gamma} \ln(n) + 0.7 \right) \cdot \frac{10}{n^2} < \frac{4}{3}. \nonumber
\end{align}
The proof is completed.
\end{proof}

\section{Simple groups of Lie type}

In this section we will prove that $G$ is HS for $G$ a finite simple group of Lie type, by showing that $\mathcal{J}(G) < 2$ in this case. Throughout the section $G$ will be a simple group of Lie type. We first record the necessary bounds on the number of maximal subgroups and the minimal index of a maximal subgroup from the literature for the classical case. For us, the \emph{parameters} of a simple classical group of Lie type are the dimension $n$ of the underlying vector space (so that for example $n$ is even, in the symplectic case) and the size $q$ of the underlying field, except for the unitary case in which the underlying field has size $q^2$.

The following is Theorem 4.1 in \cite{Liebeck}.

\begin{lemma} \label{Lemmaq3n}
Let $M$ be a maximal subgroup of a finite simple classical group $G$, with parameters $n$, $q$, and let $\delta$ be equal to $2$ if $G=\PSU(n,q)$ and $1$ otherwise. Then one of the following holds.
\begin{enumerate}
\item $M$ lies in one of the Aschbacher classes $\mathscr{C}_i$, $i=1,\ldots,8$.
\item $M$ is isomorphic to one of the groups $A_{n+1}$, $S_{n+1}$, $A_{n+2}$, $S_{n+2}$.
\item $|M| < q^{3 \delta n}$.
\end{enumerate}
\end{lemma}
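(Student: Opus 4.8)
The plan is to reduce to the non-geometric maximal subgroups by Aschbacher's theorem and then bound the order of such a subgroup through the minimal dimension of a faithful representation of its socle. First I would invoke Aschbacher's theorem: every maximal subgroup $M$ of $G$ either lies in one of the eight geometric classes $\mathscr{C}_1,\ldots,\mathscr{C}_8$, giving conclusion (1), or lies in the residual class $\mathscr{S}$. So it remains to treat $M \in \mathscr{S}$. By the definition of $\mathscr{S}$, such an $M$ is almost simple with socle a nonabelian simple group $T$, so that $T \leq M \leq \Aut(T)$ and $T$ acts absolutely irreducibly on the natural module $V$ of dimension $n$ over $\mathbb{F}_q$ (over $\mathbb{F}_{q^2}$ in the unitary case), preserving no subfield, extension-field or tensor decomposition.

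For such $M$ the basic estimate is $|M| \leq |\Aut(T)| = |T| \cdot |\Out(T)|$, and since $|\Out(T)|$ is small, everything comes down to bounding $|T|$ in terms of $n$ and $q$. Because $T$ acts faithfully and absolutely irreducibly in dimension $n$, we have $n \geq R_p(T)$, where $R_p(T)$ denotes the least dimension of a nontrivial faithful projective irreducible representation of $T$ over a field of characteristic $p$ (the prime dividing $q$), and $q \geq 2$. The strategy is then to supply, for each family of simple groups $T$ from the Classification, a lower bound of the form $R_p(T) \geq f(|T|)$ strong enough that $|T| \cdot |\Out(T)| < q^{3n}$ follows, yielding conclusion (3).

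I would organise this by the isomorphism type of $T$. When $T$ is of Lie type, the Landazuri--Seitz--Zalesskii lower bounds on $R_p(T)$, in both the defining and the cross characteristic, grow rapidly enough that $|\Aut(T)| < q^{3n}$ holds with room to spare. Sporadic groups $T$ are handled directly from the known tables of their minimal faithful representation degrees. In both situations the bound (3) is obtained without exceptions.

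The delicate case, which produces the exceptions in conclusion (2), is $T \cong A_d$. Here the fully deleted permutation module realises $A_d$ faithfully and irreducibly in dimension $d-1$ or $d-2$ according to whether $p \nmid d$ or $p \mid d$, and this dimension is small relative to $|A_d| = d!/2$, so the inequality $|\Aut(T)| < q^{3n}$ can genuinely fail. The task is then to pin down exactly when it fails: one matches $n \in \{d-1,d-2\}$ against the field size and against whether the module carries a preserved form, and discards the small-degree coincidences; what survives is precisely the list $A_{n+1}, S_{n+1}, A_{n+2}, S_{n+2}$. I expect this alternating-group bookkeeping, together with assembling uniform representation-dimension bounds across all CFSG families, to be the main obstacle, whereas the passage from those bounds to (3) is routine. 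As this is exactly Theorem 4.1 of \cite{Liebeck}, in the present paper I would simply cite that result.
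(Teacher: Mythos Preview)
Your proposal is correct and matches the paper's approach exactly: the paper states this lemma as a direct citation of Theorem~4.1 in \cite{Liebeck} with no further argument, and you conclude the same way. The sketch you give of Liebeck's argument is reasonable background but goes beyond what the paper itself provides.
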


For a natural number $n$ let $d(n)$ be the number of divisors of $n$ and let $\pi(n)$ be the number of prime divisors of $n$.

\begin{lemma} \label{Lemmac1c8}
If $G$ is a finite simple classical group with parameters $n,q$, then the number of isomorphism classes of maximal subgroups of $G$ in the Aschbacher classes $\mathscr{C}_1,\ldots,\mathscr{C}_8$ is at most
$$\frac{3}{2}n + 4d(n) + \pi(n) + \log_2 \log_2 q + 3 \log_2 n + 8.$$
\end{lemma}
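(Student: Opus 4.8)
The plan is to treat each Aschbacher class $\mathscr{C}_1,\ldots,\mathscr{C}_8$ separately, to bound in each case the number of $G$-conjugacy classes of its maximal members as a function of the parameters $n$ and $q$, and then to add the eight bounds. Since the number of isomorphism classes of subgroups never exceeds the number of conjugacy classes, this suffices. All the required enumerations are explicit in the literature: the geometric classes are catalogued with their structure and conjugacy conditions in the tables of Kleidman and Liebeck, and in low dimensions (where Aschbacher's theorem must be supplemented by the almost-simple class $\mathscr{S}$) in \cite{BHR}. Thus the proof is essentially a careful reading of those tables, organized so that every term on the right-hand side is pinned to one or two classes.

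I would first dispose of the classes producing the arithmetic terms. A member of $\mathscr{C}_2$ (an imprimitive decomposition $V=\bigoplus_i V_i$ into equidimensional pieces) is determined by a divisor $m\mid n$ with $m<n$ giving the common dimension, together with a bounded amount of form-type data, so this yields at most $c\,d(n)$ classes; a member of $\mathscr{C}_4$ (a tensor decomposition $V=V_1\otimes V_2$) is likewise indexed by a factorization $n=n_1n_2$, hence again at most $c\,d(n)$ classes, and together these account for the $4d(n)$ term. A member of $\mathscr{C}_3$ (a field-extension subgroup over $\mathbb{F}_{q^s}$) requires $s$ prime with $s\mid n$, giving at most $\pi(n)$ values of $s$, which is the $\pi(n)$ term. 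A member of $\mathscr{C}_5$ (a subfield subgroup with $q=q_0^{r}$, $r$ prime) is indexed by a prime divisor of $e:=\log_p q$; since $q\ge 2^{e}$ we have $e\le\log_2 q$, so the number of such primes is at most $\log_2 e\le\log_2\log_2 q$, giving that term. A member of $\mathscr{C}_7$ (tensor-induced, $V=\bigotimes_{i=1}^{t}W$ with $m^t=n$, $t\ge 2$) forces a decomposition $n=m^t$ with $t\ge 2$, of which there are at most $\log_2 n$, and allowing for the bounded type multiplicity this contributes the $3\log_2 n$ term. Finally $\mathscr{C}_6$ (normalizers of symplectic-type $r$-groups, possible only when $n$ is a prime power) and $\mathscr{C}_8$ (the classical subgroups, i.e.\ the $\Sp$-, $\SU$- and orthogonal-type subgroups of the ambient group) each contribute only a bounded number of classes, absorbed into the constant $8$.

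The remaining and dominant contribution is $\mathscr{C}_1$, the subspace stabilizers, which supplies the leading $\tfrac{3}{2}n$. In the linear case these are the stabilizers of $k$-dimensional subspaces, $1\le k\le n-1$, at most $n-1$ classes. In the unitary, symplectic and orthogonal cases one counts, for each relevant dimension $k$, the stabilizers of a non-degenerate subspace of dimension $k$ (with up to two form-types in the orthogonal case) together with the stabilizers of a totally singular subspace of dimension $k$ bounded by the Witt index. The orthogonal groups are worst: the non-degenerate subspaces of the two possible types across the $\approx n/2$ dimensions give roughly $n$ classes, while the totally singular subspaces of dimension up to $\approx n/2$ give a further $\approx n/2$, for a total of about $\tfrac{3}{2}n$. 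I expect this enumeration—keeping straight the parity of $n$, the sign $\epsilon$, the Witt index, and the form-type on each subspace—to be the main obstacle, since it is where the precise coefficient $\tfrac{3}{2}$ is forced and where the orthogonal subcases proliferate; the other classes are comparatively routine.

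Adding the eight bounds gives the stated estimate. For the finitely many small values of $n$ in which the geometric classes behave exceptionally, the complete lists in \cite{BHR} show that the number of geometric classes is no larger than in the generic count, so the inequality persists throughout.
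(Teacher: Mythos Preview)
Your class-by-class enumeration is the standard route to this inequality and is essentially correct: the attributions of $\tfrac{3}{2}n$ to $\mathscr{C}_1$, $4d(n)$ to $\mathscr{C}_2\cup\mathscr{C}_4$, $\pi(n)$ to $\mathscr{C}_3$, $\log_2\log_2 q$ to $\mathscr{C}_5$, $3\log_2 n$ to $\mathscr{C}_7$, and the constant to $\mathscr{C}_6\cup\mathscr{C}_8$ are exactly how one arrives at this bound from the Kleidman--Liebeck tables. The paper, however, does not carry out this enumeration at all: it simply invokes \cite[Lemma 2.1]{GKS}, where precisely this count (stated for $\Delta$-conjugacy classes rather than $G$-conjugacy classes) is already on record, and then observes that $\Delta$-conjugate subgroups are isomorphic, so the number of isomorphism types is bounded by the number of $\Delta$-classes. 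In other words, your sketch is a reconstruction of the proof of the cited lemma rather than of the lemma in the present paper; what you gain is a self-contained argument, what the paper gains is brevity and the assurance that the delicate constants (in particular the coefficient $\tfrac{3}{2}$ coming from the orthogonal $\mathscr{C}_1$ count and the small extra $\mathscr{C}_3$ contributions you glossed over) have already been checked elsewhere.
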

\begin{proof}
This follows from \cite[Lemma 2.1]{GKS} and the obvious fact that, in the notation of \cite{GKS}, $\Delta$-conjugate subgroups are isomorphic.
\end{proof}

\begin{lemma} \label{LemmaImprovedL}
Let $G$ be a finite simple classical group with parameters $n,q$. Let $\delta$ be $2$ if $G=\PSU(n,q)$ and $1$ otherwise. Let $m_1$ be the smallest index of a proper subgroup of $G$. Let $k$ be the number of isomorphism types of maximal subgroups of $G$ of order at least $q^{3\delta n}$. Let $\ell = |\{|M|\ |\ M <_{\max} G,\ |M| \leq q^{3\delta n}\}|$. Then
\begin{align*}
\mathcal{J}(G)-1 & \leq \left( \frac{k}{m_1} + \frac{q^{3\delta n} \cdot \ell}{|G|} \right) \cdot (e^{\gamma}+1) \cdot \ln\ln \left( \frac{|G|}{m_1} \right).
\end{align*}
Moreover, $k \leq (3/2)n+14\sqrt{n}+12+\log_2\log_2 q$.
\end{lemma}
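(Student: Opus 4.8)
The plan is to split the sum $\mathcal{J}(G)-1 = \sum_M \mathcal{J}(M)/[G:M]$, taken over representatives of maximal subgroups $M$ up to isomorphism, into two pieces according to whether $|M| \geq q^{3n}$ or $|M| \leq q^{3n}$, and to bound each piece using the fact (noted in the proof of Lemma~\ref{lem:GeneralSimplification}) that $\mathcal{J}(M) \leq \mathcal{B}(|M|) \leq (e^\gamma+1)\ln\ln|M|$ whenever $|M| \geq 16$. Since any maximal subgroup of a simple classical group is comfortably larger than $16$, this bound applies throughout, and because $\ln\ln$ is increasing, I would replace $\ln\ln|M|$ by $\ln\ln(|G|/m_1)$ in every term, using that $|M| \leq |G|/m_1$ as $m_1$ is the minimal index.

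For the first piece, the subgroups of order at least $q^{3n}$, there are $k$ isomorphism types by definition, and each satisfies $[G:M] \geq m_1$, so this piece is at most $(k/m_1)\cdot(e^\gamma+1)\cdot\ln\ln(|G|/m_1)$. For the second piece, the subgroups with $|M| \leq q^{3n}$, I would bound $[G:M] = |G|/|M| \geq |G|/q^{3n}$, so each term is at most $(q^{3n}/|G|)\cdot(e^\gamma+1)\cdot\ln\ln(|G|/m_1)$; summing over the $\ell$ distinct orders gives $(q^{3n}\ell/|G|)\cdot(e^\gamma+1)\cdot\ln\ln(|G|/m_1)$. Adding the two pieces yields exactly the displayed inequality. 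Note this is essentially the bookkeeping of Lemma~\ref{f2}(2) specialized to the cut at $q^{3n}$, combined with the uniform logarithmic bound on $\mathcal{J}(M)$.

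For the estimate on $k$, I would invoke Lemma~\ref{Lemmaq3n}: a maximal subgroup of order at least $q^{3n}$ cannot fall into case (3), so it is either a geometric ($\mathscr{C}_1,\ldots,\mathscr{C}_8$) subgroup or one of $A_{n+1}, S_{n+1}, A_{n+2}, S_{n+2}$. The geometric subgroups contribute at most the Lemma~\ref{Lemmac1c8} bound $\tfrac32 n + 4d(n) + \pi(n) + \log_2\log_2 q + 3\log_2 n + 8$, and the four near-alternating possibilities add at most $4$. The task is then to absorb $4d(n)+\pi(n)+3\log_2 n + 12$ into $14\sqrt{n}+12$, i.e.\ to check $4d(n)+\pi(n)+3\log_2 n \leq 14\sqrt{n}$. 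Using the crude but standard bounds $d(n) \leq 2\sqrt{n}$, $\pi(n) \leq \log_2 n$, and $\log_2 n \leq \sqrt{n}$ (for $n\geq 1$) makes this routine.

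The main obstacle is purely the final divisor-function bookkeeping: one must pick elementary bounds for $d(n)$, $\pi(n)$, and $\log_2 n$ that are simultaneously valid for all relevant $n$ and that collapse cleanly into the $14\sqrt{n}+12$ form, verifying the small cases where the asymptotically loose bounds $d(n)\leq 2\sqrt n$ or $\log_2 n \leq \sqrt n$ are tightest. Everything else is a direct assembly of Lemmas~\ref{Lemmaq3n}, \ref{Lemmac1c8}, and the logarithmic estimate on $\mathcal{J}(M)$.
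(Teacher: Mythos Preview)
Your proposal is correct and follows essentially the same route as the paper: apply Lemma~\ref{f2}(2) with the cut at $q^{3n}$, bound every $\mathcal{B}(|M|)$ above by $(e^\gamma+1)\ln\ln(|G|/m_1)$ via monotonicity (the paper secures the needed lower bound on $|M|$ through Lemma~\ref{maxs3} rather than asserting $|M|\geq 16$, which fails for $\PSL(2,4)\cong A_5$), and then combine Lemmas~\ref{Lemmaq3n} and~\ref{Lemmac1c8} with the four extra alternating/symmetric types and elementary divisor bounds to estimate $k$. The paper's specific elementary choices are $d(n)\leq 2\sqrt n$, $\pi(n)\leq 2\sqrt n$, and $3\log_2 n\leq 4\sqrt n$, which hold for all $n\geq 1$ and sum exactly to $14\sqrt n$, avoiding the small-case verification your bound $\log_2 n\leq \sqrt n$ (false for $5\leq n\leq 15$) would require.
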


\begin{proof}
Let $\mathcal{M}$ be a set of representatives of maximal subgroups of $G$ up to isomorphism and let $M_1,\ldots,M_k$ be the members of $\mathcal{M}$ whose order is at least $q^{3\delta n}$. If $\mathcal{M}$ is non-empty we assume $[G:M_1] = m_1$. We apply Lemma \ref{f2} with $\mathcal{O} = \{|M|\ |\ M \in \mathcal{M},\ M \not \in \{M_1,\ldots,M_k\}\}$. Note that $\mathcal{O} \subseteq \{1,\ldots,q^{3\delta n}\}$ and $|\mathcal{O}| \leq \ell$, hence the sum of the orders of the elements of $\mathcal{O}$ is at most $q^{3\delta n} \cdot \ell$. Choose $j \in \{1,\ldots,k\}$ so that $\mathcal{J}(M_j)$ is largest. Since $[G:M_i] \geq m_1$ for all $i=1,\ldots,k$,
\begin{align*}
\mathcal{J}(G)-1 & \leq \sum_{i=1}^k \frac{\mathcal{J}(M_i)}{[G:M_i]} + \sum_{m \in \mathcal{O}} \frac{\mathcal{B}(m)}{|G|/m} \leq k \cdot \mathcal{J}(M_j)/m_1 + |\mathcal{O}| \frac{\mathcal{B}(q^{3\delta n})}{|G|/q^{3\delta n}} \\ 
& \leq \left(\frac{k}{m_1} + \frac{q^{3\delta n} \cdot \ell}{|G|} \right) \cdot (e^{\gamma}+1) \cdot \ln \ln \left( \frac{|G|}{m_1} \right).
\end{align*}
We used the fact that $\mathcal{B}(x)$ is increasing for $x \geq 6.23$ to obtain $\mathcal{B}(|M|) \leq \mathcal{B}(|G|/m_1)$ for each $M \in \mathcal{M}$ and also that $\mathcal{J}(M_j) \leq \mathcal{B}(|M_j|)$ by Lemma~\ref{f2}. Note here that $\mathcal{B}(6) < \mathcal{B}(7)$, so that this inequality follows from Lemma~\ref{maxs3}. The first claim of the lemma follows. 

The upper bound for $k$ follows from Lemma \ref{Lemmaq3n}, Lemma \ref{Lemmac1c8} and $d(n) \leq 2\sqrt{n}$, $\pi(n) \leq 2\sqrt{n}$, $3\log_2(n) \leq 4\sqrt{n}$.
\end{proof}

\begin{lemma} \label{Lemma5.2}
Let $G$ be a simple classical group with parameters $n,q$ and let $m_1$ be the smallest index of a proper subgroup of $G$. Then
$$\mathcal{J}(G)-1 \leq \frac{1}{m_1} \left( 2n^{5.2}+n \log_2 \log_2 q \right) (e^{\gamma}+1) \ln \ln \left( \frac{|G|}{m_1} \right).$$
\end{lemma}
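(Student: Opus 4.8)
The plan is to read the estimate straight off Lemma~\ref{LemmaImprovedL}, which already gives
\[
\mathcal{J}(G)-1 \leq \left( \frac{k}{m_1} + \frac{q^{3n}\ell}{|G|}\right)(e^{\gamma}+1)\ln\ln\left(\frac{|G|}{m_1}\right).
\]
Since the factor $(e^{\gamma}+1)\ln\ln(|G|/m_1)$ is positive (the largest maximal subgroup certainly has order $>e$), it suffices to prove the purely arithmetic inequality obtained after multiplying through by $m_1$, namely
\[
k + \frac{q^{3n}\,\ell\, m_1}{|G|} \leq 2n^{5.2} + n\log_2\log_2 q. \tag{$\star$}
\]
So the whole task is reduced to verifying $(\star)$.

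First I would dispose of $k$. Lemma~\ref{LemmaImprovedL} supplies $k \leq \tfrac{3}{2}n + 14\sqrt{n} + 12 + \log_2\log_2 q$; since $\log_2\log_2 q \geq 0$ for $q \geq 2$, one checks directly that $\tfrac{3}{2}n + 14\sqrt{n} + 12 \leq n^{5.2}$ for all $n \geq 2$, while $\log_2\log_2 q \leq n\log_2\log_2 q$ because $n \geq 1$. Hence $k$ consumes at most $n^{5.2} + n\log_2\log_2 q$ of the right-hand side of $(\star)$, and it remains only to bound the second summand $q^{3n}\ell m_1/|G|$ by the leftover $n^{5.2}$.

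For that summand I would separate its two sources of size. The factor $q^{3n}m_1/|G|$ measures how far the largest maximal subgroup sits above the threshold $q^{3n}$: writing $|G|/m_1$ for the order of that subgroup (a subspace-stabilising parabolic), I would check family by family from the explicit order formulas — comparing $\dim G$, which is of order $\tfrac12 n^2$ to $n^2$, against $m_1$, which is of order $q^{n-1}$ — that $|G|/m_1 \geq q^{3n}$, equivalently $q^{3n}m_1/|G| \leq 1$. This collapses the second summand to $\ell$, the number of distinct orders of maximal subgroups of order at most $q^{3n}$. Each such order is realised by a conjugacy class of maximal subgroups, so $\ell$ is bounded by the number of such classes: those in $\mathscr{C}_1,\ldots,\mathscr{C}_8$ are counted by Lemma~\ref{Lemmac1c8} and contribute only $O(n+\log_2\log_2 q)$, and the remaining classes are of type $\mathscr{S}$ (together with the alternating and symmetric groups of Lemma~\ref{Lemmaq3n}(2)), whose number is bounded by a polynomial in $n$. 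Combining these gives $\ell \leq n^{5.2}$, and $(\star)$ follows.

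The hard part — and the reason the exponent is taken as large as $5.2$ — is this last step: securing a sufficiently uniform polynomial-in-$n$ bound on the number of $\mathscr{S}$-type maximal subgroups (equivalently, on the distinct orders they contribute), valid across every classical family and every $q$, with any residual dependence on $q$ weak enough to be absorbed into the $n\log_2\log_2 q$ term. A secondary, more mechanical obstacle is the verification that $|G|/m_1 \geq q^{3n}$: this inequality forces the rank to be bounded below, so the finitely many low-dimensional groups, where $q^{3n}$ can exceed the order of every maximal subgroup (for instance $\PSL(2,q)$ and $\Sp(4,q)$ for large $q$), lie outside the regime in which $(\star)$ holds and have to be handled separately.
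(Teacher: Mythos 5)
There are two genuine gaps here, both traceable to the fact that you route the proof through Lemma~\ref{LemmaImprovedL} rather than through Lemma~\ref{f2} directly. The paper's proof has a different and much simpler shape: apply Lemma~\ref{f2}(2) with $\mathcal{O}=\emptyset$, so that \emph{every} isomorphism class of maximal subgroups $M$ contributes a term $\mathcal{J}(M)/[G:M] \leq (e^{\gamma}+1)\ln\ln(|G|/m_1)/m_1$, and then quote \cite[Theorem 1.1]{Hasa}, which states that the total number of (conjugacy, hence isomorphism) classes of maximal subgroups of a classical group with parameters $n,q$ is at most $2n^{5.2}+n\log_2\log_2 q$. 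The exponent $5.2$ is imported wholesale from that external theorem; it is not re-derived. Your first gap is exactly at this point: your step ``combining these gives $\ell \leq n^{5.2}$'' --- a polynomial-in-$n$ bound on the number of classes of maximal subgroups outside $\mathscr{C}_1,\ldots,\mathscr{C}_8$ --- is asserted, not proved, and you yourself flag it as ``the hard part''. That assertion \emph{is} the substance of the cited theorem, so your proposal assumes precisely what makes the lemma nontrivial.

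The second gap is that your reduction to $(\star)$ cannot cover the stated generality of the lemma. The inequality $(\star)$ needs $|G|/m_1 \geq q^{3n}$, and this fails not for ``finitely many low-dimensional groups'' but for infinitely many classical groups: for every $q$ one has $\PSL(2,q)$ (where $|G|/m_1 \approx q^2 \ll q^6$), $\PSL(3,q)$, $\PSL(4,q)$, $\PSp(4,q)$, $\PSp(6,q)$, $\OO(7,q)$ and $\OO^{\pm}(8,q)$ (where $|G|/m_1 \approx q^{22} < q^{24}$), among others. In these cases $(\star)$ is actually false, not merely unproved: for $\PSL(2,q)$ the term $q^{3n}\ell m_1/|G|$ grows like $q^{4}$, while the right-hand side of $(\star)$ is bounded by a constant plus $2\log_2\log_2 q$. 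So these families cannot be ``handled separately'' inside your framework; they require an argument that never compares $|G|/m_1$ with the threshold $q^{3n}$, which is exactly what the paper's uniform application of Lemma~\ref{f2} with $\mathcal{O}=\emptyset$ provides, valid for all $n$ and $q$ at once.
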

\begin{proof}
It follows from Lemma \ref{f2}, with $\mathcal{O}=\emptyset$ and the fact that $k \leq 2n^{5.2} + n\log_2\log_2 q$ by \cite[Theorem 1.1]{Hasa}.
\end{proof}

Recall the generic isomorphisms between group of Lie type:
$\PSU(2,q) \cong \PSL(2,q)$, 
$\PSp(2,q) = \PSL(2,q)$, 
$\PSp(4,q) \cong \OO(5,q)$, 
$\OO(3,q) \cong \PSL(2,q)$, 
$\OO^{+}(4,q) \cong \PSL(2,q) \times \PSL(2,q)$, 
$\OO^{-}(4,q) \cong \PSL(2,q^2)$, 
$\OO^{+}(6,q) \cong \PSL(4,q)$, 
$\OO^{-}(6,q) \cong \PSU(4,q)$.
We also have $\OO(n,q) \cong \PSp(n-1,q)$, if $n$ is odd and $q$ is even. In view of these isomorphisms we restrict our choices of parameters in the proofs of the following Lemma~\ref{Bounds} and Proposition~\ref{Classical}. Namely, we only consider the group with smaller dimension parameter $n$ and if these parameters are equal we prefer projective special linear over other types of groups.

\begin{lemma} \label{Bounds}
Let $G$ be a simple classical group with parameters $n,q$ and let $m_1$ be the smallest index of a proper subgroup of $G$. Then $|G|$ and $m_1$ are bounded according to Table \ref{TableGm1}. We deduce that, in all cases,
$$q^{n^2} \geq |G| \geq \frac{1}{4} (q-1)^{(n-1)/2} q^{n(n-2)/2}, \hspace{.6cm}
m_1(G) \geq q^{n-2}.$$
%It is easy to see that $q^{n^2} > |G| > q^{n(n-2)/2}$ in all cases. 
\end{lemma}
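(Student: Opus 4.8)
The plan is to verify Table~\ref{TableGm1} and the two displayed inequalities family by family, running through the linear, unitary, symplectic and orthogonal groups separately. For each family I would record two pieces of standard data from the literature (e.g.\ \cite{BHR}): the usual order formula and the minimal index $m_1$ of a proper subgroup. Every order has the shape $|G| = \frac{1}{d}\, q^{N} \prod_{i} (q^{a_i} - \varepsilon_i)$ with $\varepsilon_i \in \{1,-1\}$ and $d$ a small central/gcd factor — namely $\gcd(n,q-1)$ for $\PSL$, the product of $(q+1)$ and $\gcd(n,q+1)$ for $\PSU$, $\gcd(2,q-1)$ for $\PSp$ and odd-dimensional orthogonal groups, and a divisor of $\gcd(4,\,q^{m}\mp 1)$ for the even orthogonal groups (so $d \le 4$ in all orthogonal cases after passing to $\operatorname{P\Omega}$). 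In each case $m_1$ is the number of isotropic or singular $1$-spaces of the natural geometry, except in the linear case where it equals $(q^{n}-1)/(q-1)$. Collecting these entries produces the table, and the two uniform inequalities then follow from factorwise estimates.

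For the upper bound $|G| \le q^{n^{2}}$ I would bound each factor crudely. In the linear case $|\GL(n,q)| = q^{n(n-1)/2}\prod_{i=1}^{n}(q^{i}-1) < q^{n(n-1)/2}\,q^{n(n+1)/2} = q^{n^{2}}$, and since $|\PSL(n,q)| = |\GL(n,q)|/((q-1)\gcd(n,q-1))$ the bound is immediate and far from tight. The unitary case is the only delicate one: here $\prod_{i=1}^{n}(q^{i}-(-1)^{i}) \le q^{n(n+1)/2}\prod_{i\ge 1}(1+q^{-i})$, and since the infinite product stays below $q+1$ for every $q\ge 2$ (already for $q=2$ it is about $2.38 < 3$), the extra divisor $(q+1)\gcd(n,q+1)$ incurred in passing from $\GU$ to $\PSU$ pulls the estimate back under $q^{n^{2}}$. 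The symplectic and orthogonal orders are of the form $q^{2m^{2}+O(m)}$ with $n=2m$ or $n=2m+1$, comfortably below $q^{n^{2}}$.

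For the lower bound I would estimate each factor from below using $q^{a_i}-\varepsilon_i \ge q^{a_i-1}(q-1)$ (valid for both signs once $q\ge 2$), turning each product into a power of $q$ times a power of $(q-1)$. The tightest cases are the orthogonal groups: for $\operatorname{P\Omega}(2m+1,q)$ one obtains $|G| \ge \tfrac12 (q-1)^{m} q^{2m^{2}}$, and for the minus-type $\operatorname{P\Omega}^{-}(2m,q)$ one obtains $|G| \ge \tfrac14 (q-1)^{m-1} q^{2m^{2}-2m+1}$; in each case a one-line comparison — trading the missing half-power of $(q-1)$ against the spare power of $q$ via $q/\sqrt{q-1}\ge 1$ — shows the right-hand side dominates $\tfrac14 (q-1)^{(n-1)/2} q^{n(n-2)/2}$. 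The plus-type, symplectic, linear and unitary families are all far from tight and fall out of the same mechanical estimate.

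Finally, the bound $m_1 \ge q^{n-2}$ is read directly off the minimal-index column: in the linear case $m_1 = (q^{n}-1)/(q-1) \ge q^{n-1}$, in the symplectic case $m_1 = (q^{2m}-1)/(q-1) \ge q^{n-1}$, and in the orthogonal case $m_1$ is the count of singular points, which is $\ge q^{n-2}$ and essentially meets the bound — this is why the exponent is $n-2$ rather than $n-1$. The hard part will not be any single estimate but the bookkeeping: keeping the parameter conventions straight (notably that the unitary field has size $q^{2}$), correctly absorbing the gcd and centre factors $d$, and checking that the small-rank groups with exceptional isomorphisms or degenerate geometry do not violate the tabulated entries. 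Once the table is assembled correctly, both displayed inequalities are uniform consequences of the factorwise bounds above.
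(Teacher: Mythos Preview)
Your approach is essentially the same as the paper's: cite the standard order formulas and Cooperstein's minimal-index tables, then obtain the lower bounds on $|G|$ via the factorwise estimate $q^{k}-1 \ge (q-1)q^{k-1}$, and read off $m_1 \ge q^{n-2}$ from the last column. One small slip: for $\PSp(n,2)$ the minimal index is not $(q^{n}-1)/(q-1)$ but comes from an orthogonal subgroup (e.g.\ $m_1(\PSp(6,2))=28$, not $63$), which is why the table records $m_1 \ge 2^{n-2}$ rather than $q^{n-1}$ in that case; this does not affect the uniform bound $m_1 \ge q^{n-2}$, and you already flag such exceptions in your final paragraph.
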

\begin{proof}
The orders of the groups are given in \cite[Table 24.1]{MalleTesterman} and the values for $m_1$ were computed by Cooperstein and are compiled, with some corrections obtained by Mazurov and Vasil'ev, in \cite[Table 4]{GMPS}. We note that the notation in \cite[Table 4]{GMPS} for the dimension is different from ours in the case of symplectic groups.
%the values $m_1$ are given in \cite[Table 1]{Cooperstein}. We note that the value of $m_1$ given in \cite{Cooperstein} is not correct for $\PSU(6,2)$, but this does not affect our bound. Moreover, note that Cooperstein's notation for the dimension is different from ours in the case of symplectic groups. 
We use the inequality $q^k-1 \geq (q-1)q^{k-1}$ from which we deduce
\begin{align*}
\prod_{i=2}^k (q^i-1) & \geq \prod_{i=2}^k ((q-1)q^{i-1}) = (q-1)^{k-1} q^{(k^2-k)/2}, \\
\prod_{i=1}^k (q^{2i}-1) & \geq \prod_{i=1}^k ((q-1)q^{2i-1}) = (q-1)^k q^{k(k+1)-k} = (q-1)^k q^{k^2}.
\end{align*}
These are used to obtain the lower bounds for $|G|$ in Table \ref{TableGm1}.
\end{proof}

\begin{table}[ht]
\caption{Bounds on orders and minimal index for classical groups}
\begin{tabular}{|c|c|c|c|} \hline
\mbox{Group} & $|G| \leq$ & $|G| \geq$ & $m_1(n,q) \geq$ \\ \hline
$\PSL(n,q)$ & $q^{n^2}$ & $(q-1)^{n-2} q^{n^2-n}$ & $q^{n-1}$ \mbox{ except} \\
$n \geq 2$,\ $(n,q) \neq (2,2),(2,3)$ & & & $m_1(2,9)=6$ \\ \hline
$\PSU(n,q)$ & $q^{n^2}$ & $\frac{(q-1)^{n-1}}{q+1} q^{n^2-n}$ & $\frac{q^{2n-2}}{q+1}$ \mbox{ except } \\
$n \geq 3$,\ $(n,q) \neq (3,2)$ & & & $m_1(3,5)=50$, \\
& & & $m_1(4,q)=(q+1)(q^3+1)$ \\ \hline
$\PSp(n,q)$ & $q^{n^2}$ & $\frac{1}{2} (q-1)^{n/2} q^{n^2/2}$ & $2^{n-2}$ \mbox{ if } $q=2$, \\
$n \geq 4$\ \mbox{even},\ $(n,q) \neq (4,2)$ & & & $q^{n-1}$ \mbox{ if } $q > 2$ \\ \hline
$\OO(n,q)$ & $q^{n^2/2}$ & $\frac{1}{2} (q-1)^{(n-1)/2} q^{(n-1)^2/2}$ & $q^{n-2}$ \\
$n \geq 7$\ \mbox{odd},\ $q$\ \mbox{odd} & & & \\ \hline
$\OO^{\pm}(n,q)$ & $q^{n^2/2}$ & $\frac{1}{4} (q-1)^{n/2} q^{n(n-2)/2}$ & $q^{n-2}$ \\
$n \geq 8$ \mbox{ even} & & & \\ \hline
\end{tabular} \label{TableGm1}
\end{table}

\begin{prop}\label{Classical}
If $G$ is a finite simple classical group, then $\mathcal{J}(G)<2$. In particular $G$ is HS.
\end{prop}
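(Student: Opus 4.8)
By Lemma~\ref{basics}(1) it is enough to establish the stronger statement $\mathcal{J}(G)-1<1$. My plan is to feed the order and minimal-index estimates of Lemma~\ref{Bounds} (Table~\ref{TableGm1}) into the two uniform bounds already prepared, Lemma~\ref{LemmaImprovedL} and Lemma~\ref{Lemma5.2}, and to organise the proof according to the rank $n$. Throughout, writing $L=|G|/m_1\le q^{n^2}$, one has $\ln\ln L\le \ln(n^2\ln q)$, and both bounds have the shape (number of maximal classes)$/m_1\cdot(e^\gamma+1)\ln\ln L$ (plus, in Lemma~\ref{LemmaImprovedL}, a contribution from the small maximal subgroups of order below $q^{3n}$). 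The whole point is that this quantity falls below $1$ once the parameters are not too small.

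For large rank I would use Lemma~\ref{LemmaImprovedL}, whose count of large maximal classes is only linear, $k\le \tfrac32 n+14\sqrt n+12+\log_2\log_2 q$. With $m_1\ge q^{n-2}\ge 2^{n-2}$ the first summand is bounded by
\[
\frac{\tfrac32 n+14\sqrt n+12+\log_2\log_2 q}{q^{\,n-2}}\,(e^\gamma+1)\,\ln(n^2\ln q),
\]
in which the exponential denominator dominates the linear-times-logarithmic numerator uniformly in $q$ once $n$ is large; a direct monotonicity check (the worst case being $q=2$) shows this is $<1$ for all $n\ge 13$. For the second summand, Lemma~\ref{Bounds} gives $q^{3n}/|G|\le 4(q-1)^{-(n-1)/2}q^{n(8-n)/2}$, whose exponent of $q$ is nonpositive as soon as $n\ge 8$; since $\ell$ is only polynomial in $n$ and $\log_2\log_2 q$ by Lemma~\ref{Lemmac1c8}, this summand tends to $0$. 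Hence $\mathcal{J}(G)-1<1$ for every classical group of rank $n\ge 13$ and every $q$.

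There remain the families of bounded rank $2\le n\le 12$, all of which live in dimension at most $12$. For fixed such $n$ and growing $q$ I would instead invoke Lemma~\ref{Lemma5.2}: its Hasa count $2n^{5.2}+n\log_2\log_2 q$ is, for fixed $n$, a constant plus a $\log_2\log_2 q$ term, while $m_1$ grows like the positive power $q^{\,n-1}$ recorded in Table~\ref{TableGm1} (using the genuine value $m_1\ge q$ for $\PSL(2,q)\cong\PSU(2,q)$, where the uniform bound $q^{n-2}$ degenerates). Thus the bound behaves like $(\log_2\log_2 q)^2/q^{\,n-1}\to 0$ and drops below $1$ for all $q\ge q_0(n)$, leaving only finitely many pairs $(n,q)$. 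Each of these finitely many groups is of dimension at most $12$, so the exact orders and numbers of its maximal subgroups are available from the tables of \cite{BHR}; feeding them into Lemma~\ref{f2}(2), or for the smallest groups computing $\mathcal{J}(G)$ directly in GAP \cite{GitRepGAP}, finishes the verification.

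The hard part is precisely this boundary zone of low-dimensional groups over small fields. There neither asymptotic estimate is yet below $1$: such groups possess comparatively many and comparatively large maximal subgroups relative to their order, and the crude bound $|M|<q^{3n}$ of Lemma~\ref{LemmaImprovedL} is useless here because the exponent $n(8-n)/2$ is positive for $n\le 7$, so that $q^{3n}$ is not negligible against $|G|$. Consequently this regime cannot be handled by generic estimates and forces reliance on the explicit, CFSG-dependent classification of maximal subgroups of low-dimensional classical groups together with a nontrivial amount of computer verification; controlling the thresholds $q_0(n)$ so that the leftover list is genuinely finite, and then discharging that list case by case, is where the real work lies.
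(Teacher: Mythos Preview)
Your plan is essentially the paper's own strategy: split at $n=13$, use Lemma~\ref{LemmaImprovedL} for large rank, and for $n\le 12$ use a per-family bound on the number of maximal subgroup classes together with the minimal-index data of Table~\ref{TableGm1}, finishing the finitely many leftover $(n,q)$ via \cite{BHR} and GAP. The paper carries out exactly this, though for $n\le 12$ it uses the sharper constants $C$ extracted from \cite{BHR} (its Table~\ref{TableC}) rather than the cruder Hasa count $2n^{5.2}$ you propose, which keeps the residual list short.

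One genuine slip: you bound the parameter $\ell$ of Lemma~\ref{LemmaImprovedL} by invoking Lemma~\ref{Lemmac1c8}, but that lemma only counts the geometric Aschbacher classes $\mathscr{C}_1,\ldots,\mathscr{C}_8$; the subgroups contributing to $\ell$ include the almost-simple class~$\mathscr{S}$ maximals of order below $q^{3n}$, which Lemma~\ref{Lemmac1c8} says nothing about. The fix is immediate: either cite Hasa's global bound (the source behind Lemma~\ref{Lemma5.2}), which does give a polynomial bound on \emph{all} maximal classes, or do what the paper does and use the trivial estimate $\ell\le q^{3n}$, giving a $q^{6n}/|G|$ term that is still negligible once $n\ge 16$ (with $n\in\{13,14,15\}$ then handled by finer divisor counts). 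Apart from this miscitation, and the fact that you explicitly defer the low-dimension case work rather than executing it, your outline matches the paper's proof.
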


\begin{proof}
Let $G$ be a finite simple classical group of Lie type with parameters $n,q$ and let $\delta$ be $2$ if $G=\PSU(n,q)$ and $1$ otherwise. Assume first that $n \geq 13$.
By Lemma \ref{LemmaImprovedL}
\begin{equation} \label{fg-1leq}
\mathcal{J}(G)-1 \leq \left( \frac{q^{3\delta n} \cdot \ell}{|G|} + \frac{(3/2)n+14\sqrt{n}+12+\log_2\log_2 (q)}{m_1(G)} \right) (e^{\gamma}+1) \ln \ln(q^{n^2})
\end{equation}
By \eqref{fg-1leq} and Lemma \ref{Bounds}, since $\ell \leq q^{3\delta n}$, $\log_2 \log_2 (q) \leq q$ and \begin{equation}
\ln(\ln(q^{n^2})) = \ln(n^2 \ln(q)) = 2\ln(n)+\ln\ln(q) \leq 2 \sqrt{q} \ln(n),
\end{equation}
we deduce that 
\begin{align}\label{BoundJClassicalqBigger2nBig}
\mathcal{J}(G)-1 & \leq \left( \frac{4 \cdot q^{6\delta n}}{q^{n(n-2)/2}} + \frac{(3/2)n + 14\sqrt{n} + 12 + q}{q^{n-2}} \right) \cdot 3 \cdot 2 \sqrt{q} \ln(n) \\
 & \leq \frac{\left( (3/2)n+14\sqrt{n}+16 + q \right) \cdot 6 \sqrt{q} \ln(n)}{\min\{q^{n(n-2)/2-6\delta n},q^{n-2}\}} \nonumber \leq \frac{6nq \cdot 6 \sqrt{q} \ln(n)}{\min\{q^{n(n-2)/2-6\delta n},q^{n-2}\}}
 \nonumber
\end{align}

We separate several cases. For the specific computations and solutions of inequalities, we refer to \cite{GitRepMathematica}. Assume first that $n \geq 13$. If $G$ is not of unitary type, so that $\delta=1$, then we have the following cases.

\begin{itemize}
    \item $n \geq 16$. Since $n(n-2)/2-6n \geq n-2$ and $q \geq 2$, we deduce that $\mathcal{J}(G)-1 \leq 36 n^2/q^{n-7/2}$ by \eqref{BoundJClassicalqBigger2nBig} and this is less than $1$ except for the case $(n,q)=(16,2)$.
    \item $15 \leq n \leq 16$, $q=2$. We apply \eqref{fg-1leq} with $\ell \leq q^{3n}$.
    \item $13 \leq n \leq 14, q = 2$. We use Lemma \ref{LemmaImprovedL}, where $\ell$ is bounded above by the number of divisors of $|G|$, computed in \cite{GitRepGAP}, and $m_1,|G|$ are bounded by Lemma \ref{Bounds}. 
\item $n=15, q \geq 3$. In this case \eqref{BoundJClassicalqBigger2nBig} gives 
$$\mathcal{J}(G)-1 \leq ((3/2) 15 + 14 \sqrt{15} + 16 + q) \cdot 6 \ln(15) \cdot q^{-7}$$ which is less than $1$.
    \item $n=14$, $q \geq 3$. In this case, bounding $\ell$ above by $q^{3n}$ and using Lemmas \ref{Lemmac1c8} and \ref{Bounds},
\begin{equation} \label{14case} \mathcal{J}(G)-1 \leq \left( \frac{4}{(q-1)^{13/2}} + \frac{63+\log_2\log_2(q)}{q^{12}} \right) (e^{\gamma}+1) \ln\ln(q^{14^2}).
\end{equation}
This is less than $1$. If $q \in \{3,4,5,7,8\}$ it can be checked by case-by-case inspection, and if $q \geq 9$ then $q-1 \geq q^{9/10}$ and, using $\log_2\log_2(q) \leq q$, $e^{\gamma}+1 < 2.8$ and $\ln\ln(q^{14^2}) \leq 14q$, \eqref{14case} is at most $68 \cdot 2.8 \cdot 14 \cdot q^{2-9 \cdot 13/20} < 1$.
    \item $n=13, q \geq 3$. Lemmas \ref{Lemma5.2} and \ref{Bounds} imply that
\begin{equation}\label{BoundJClassicalqBigger2n13}
\mathcal{J}(G)-1 \leq \frac{1}{q^{11}} \left( 2 \cdot 13^{5.2} + 13 \log_2 \log_2 (q) \right) \cdot 2.8 \cdot \ln \left( \ln \left( q^{13^2} \right) \right)
\end{equation}
which is smaller than $1$ for $q \geq 5$. This can be achieved with Mathematica by using the bounds $\log_2 \log_2 (q) \leq q$, $\ln(\ln(q^{13^2})) \leq 13q$.  If $q \in \{3,4\}$ we use Lemma \ref{LemmaImprovedL} where the number $\ell$ can be bounded above by the number of divisors of $|G|$, obtained in \cite{GitRepGAP}, and $m_1$ is bounded by Lemma \ref{Bounds}. We deduce that $\mathcal{J}(G) < 2$.
\end{itemize}

Assume now that $G=\PSU(n,q)$, so that $\delta=2$, and $n \geq 13$, where $F=\mathbb{F}_{q^2}$ is the base field. We have $|G| \geq q^{n^2-n}$ and $m_1 \geq q^{2n-3}$. Arguing as in \eqref{BoundJClassicalqBigger2nBig} gives, if $n \geq 14$,
$$\mathcal{J}(G)-1 \leq \frac{6nq \cdot 6 \sqrt{q} \ln(n)}{\min\{q^{n^2-n-12 n},q^{2n-3}\}} = \frac{36 n \ln(n)}{q^{\min\{n^2-13n-3/2,2n-9/2\}}} < 1.$$

Assume $n=13$ and $q \geq 3$. We use Lemma \ref{Lemma5.2} and $\ln\ln(q^{n^2}) \leq 2 \sqrt{q} \ln(n)$, which gives
\begin{align*}
\mathcal{J}(G)-1 & \leq \frac{2.8 \cdot 2 \sqrt{q} \ln(13) \cdot (q^2-1) \cdot (2 \cdot 13^{5.2}+13 \log_2\log_2(q))}{(q^{13}+1)(q^{12}-1)} \\ & \leq \frac{2.8 \cdot 2 \cdot \ln(13) \cdot (2 \cdot 13^{5.2} + 13q)}{q^{21}} < 1.
\end{align*}

Finally, assume $(n,q)=(13,2)$. Then
$$|G|=302002740016633758616955204618296772787569688576000.$$
This number has $6552576$ positive divisors. Using this as an upper bound for $\ell$, Lemma \ref{LemmaImprovedL} again gives $\mathcal{J}(G) < 2$.

Now assume $n \leq 12$. Write $q=p^k$, $p$ prime. Let $\pi(k)$ denote the number of prime divisors of $k$ and let $\ell$ be the number of isomorphism types of maximal subgroups of $G$. We have the bound $\ell \leq C + \pi(k)$ where the constant $C$ appears in Table \ref{TableC} and it can be easily deduced from \cite[Tables 8.1-8.85]{BHR}. Recall that the orthogonal groups in dimension $2$ are not simple, the groups $\PSp$, $\OO^+$, $\OO^-$ are only defined in even dimension and $\OO$ is only defined in odd dimension. Also, recall the generic isomorphisms listed before Lemma \ref{Bounds} which allow us to include more ``$*$'' in Table~\ref{TableC}.
\begin{table}[ht]
\caption{Constant $C$ - upper bound for $\ell-\pi(k)$}
\begin{tabular}{|c||c|c|c|c|c|c|c|c|c|c|c|} \hline
 $n=$ &  $2$ & $3$ & $4$ &  $5$ &  $6$ &  $7$ &  $8$ &  $9$ &  $10$ & $11$ & $12$ \\ \hline \hline
$\PSL$ & $5$ & $9$ & $14$ & $12$ & $23$ & $14$ & $20$ & $21$ & $29$ & $21$ & $31$ \\ \hline
$\PSU$ & $*$ & $8$ & $13$ & $10$ & $20$ & $11$ & $17$ & $16$ & $25$ & $16$ & $26$ \\ \hline
$\PSp$ & $*$ & $*$ & $9$ & $*$ & $17$ & $*$ & $18$ & $*$ &  $16$ & $*$ &  $24$ \\ \hline
$P\Omega$ & $*$ & $*$ & $*$ & $*$ & $*$ & $12$ & $*$ & $21$ &  $*$ &  $19$ & $*$ \\ \hline
$P\Omega^+$ & $*$ & $*$ & $*$ & $*$ & $*$ & $*$ & $22$ & $*$ & $20$ & $*$ &  $31$ \\ \hline
$P\Omega^-$ & $*$ & $*$ & $*$ & $*$ & $*$ & $*$ & $10$ & $*$ & $19$ & $*$ &  $21$ \\ \hline
\end{tabular}\label{TableC}
\end{table}

Table \ref{TableC} implies that $\ell \leq 31+\pi(k)$ for all these groups. If $k$ has $u$ distinct prime divisors, then clearly $k \geq 2^u$, so $u \leq \log_2 k$. Since $q=p^k \geq 2^k$ we have $k \leq \log_2 q$. Therefore $\pi(k) = u \leq \log_2 k \leq \log_2 \log_2 q$ and hence $\ell \leq 31+\log_2 \log_2 q$.

Let $m_1$ be the smallest index of a maximal subgroup of $G$.
We will use the bounds from Lemma \ref{Bounds} and Corollary~\ref{cor:SimplificationSimpleGroups} with $\mathcal{B}_1$:
\begin{equation}\label{BoundsnSmall}
\mathcal{J}(G)-1 \leq (\ell/m_1) \cdot (e^{\gamma}+1) \cdot \ln\ln(|G|/m_1)
\end{equation}
We will consider each family of groups separately. For the specific computations and solutions of inequalities, we refer to \cite{GitRepMathematica}.

\begin{itemize}
\item Let $G=\PSL(n,q)$. If $(n,q)=(2,9)$, then $G$ is isomorphic to $A_6$ and hence $\mathcal{J}(G)<2$ by Proposition \ref{symfbound}. Now assume that $(n,q) \neq (2,9)$. By Lemma \ref{Bounds}, $m_1 \geq q^{n-1}$ so
\begin{equation*}
\mathcal{J}(G)-1 < \frac{C+\log_2\log_2(q)}{q^{n-1}} \cdot 2.8 \cdot \ln\ln(q^{n^2-n+1}).
\end{equation*}
For purposes of simplification, we will sometimes use the rough bounds $\log_2\log_2(q) \leq \log_2(q)$ and $\ln\ln(q^{n^2-n+1}) \leq 2 \log_2(n) \log_2(q)$ (see \cite{GitRepMathematica} for the details).
The following table summarizes, for each $n$, the corresponding value of $C$ above and the smallest value of $q$ that guarantees that $\mathcal{J}(G)< 2$ via the above bound.
\begin{table}[ht]
\caption{Inequality values of $q$ for $\PSL$}
\begin{tabular}{|c|c|c|c|c|c|c|c|c|c|c|c|} \hline
$n=$ & $2$ & $3$ & $4$ & $5$ & $6$ & $7$ & $8$ & $9$ & $10$ & $11$ & $12$ \\ \hline
$C=$ & $5$ & $9$ & $14$ & $12$ & $23$ & $14$ & $20$ & $21$ & $29$ & $21$ & $31$ \\ \hline 
$q \geq$ & $59$ & $11$ & $7$ & $4$ & $3$ & $3$ & $3$ & $2$ & $2$ & $2$ & $2$ \\ \hline
\end{tabular}\label{TablePSL}
\end{table}

A GAP computation shows that $\mathcal{J}(G)<2$ for $(n,q)$ in the following list: $(3,2)$, $(3,3)$, $(3,4)$, $(3,5)$, $(3,7)$, $(3,8)$, $(3,9)$, $(4,2)$, $(4,3)$, $(5,2)$ \cite{GitRepGAP}.

We use Corollary~\ref{cor:SimplificationSimpleGroups}, with the function $\mathcal{B}_1$, for the remaining cases when $n > 2$. We list a bound for $\ell$ deduced from \cite[Tables 8.8, 8.9, 8.18, 8.19, 8.24, 8.25, 8.35, 8.36, 8.44, 8.45]{BHR} and $m_1=(q^n-1)/(q-1)$ \cite[Table 4]{GMPS}.
If $(n,q)=(4,4)$ then $\ell \leq 9$, $m_1=85$;
if $(n,q)=(4,5)$ then $\ell \leq 11$, $m_1 = 156$;
if $(n,q)=(5,3)$ then $\ell \leq 10$, $m_1 = 121$;
if $(n,q)=(6,2)$ then $\ell = 7$, $m_1 = 63$;
if $(n,q)=(7,2)$ then $\ell \leq 10$, $m_1=127$;
if $(n,q)=(8,2)$ then $\ell \leq 13$, $m_1 = 255$.
It follows that $\mathcal{J}(G) < 2$ in all these cases.

Now assume $n=2$. The subgroups of $G=\PSL(2,q)$ were classified by Dickson already in 1900. They are listed in \cite[Chapter II, Hauptsatz 8.27]{Huppert}. The maximal ones can easily be distilled from there. Alternatively, one can use \cite[Theorem 3.8]{Wilson} for $n=1$. We will provide just some knowledge we need. Let $q = p^k$ be a prime power, $d = \gcd(p-1, 2)$ and $G = \PSL(2,q)$. Recall that $|\PSL(2,q)|=(q^3-q)/d$ and $|\PGL(2,q)|=q^3-q$. If $M$ is a maximal subgroup of $G$, then it is isomorphic to one of the following:
\begin{enumerate}
    \item $\PSL(2, p^{k/r})$ where $r$ is a prime divisor of $k$ distinct from $k$.
    \item $\PGL(2, p^{k/2})$ where $k$ is even.
    \item A dihedral group $D_{2(q-1)/d}$ or $D_{2(q+1)/d}$.
    \item A Frobenius group $C_p^k \rtimes C_{(q-1)/d}$
    \item $A_4$, $S_4$ or $A_5$ (only under some additional conditions).
\end{enumerate}

If $q \leq 13$ then $\mathcal{J}(G) < 2$ by \cite{GitRepGAP}. Assume $q \geq 16$, so that the biggest maximal subgroup of $G$ is the Frobenius group $C_p^k \rtimes C_{(q-1)/d}$ (by \cite[Table 4]{GMPS} for example, or by simple comparison). According to whether $k$ is odd or even, the maximal subgroup of second maximal size is isomorphic to $D_{2(q+1)/d}$, which has index $q(q-1)/2$, or $\PGL(2,\sqrt{q})$, which has index $(q^3-q)/((q^{3/2}-q^{1/2})d)$. We use Corollary~\ref{cor:SimplificationSimpleGroups} with $\mathcal{B}_2$, $m_1=q+1$, $\ell \leq 5+\log_2 \log_2 q$. 

Let $m_2$ denote the second minimal index of a maximal subgroup of $G$, so that there are the following two possibilities. If $m_2=q(q-1)/2$, then
$$\mathcal{J}(G)-1 < 2.8 \left( \frac{\ln\ln(q(q-1))}{q+1} + \frac{4+\log_2 \log_2(q)}{q(q-1)/2} \ln \ln \left( 2(q+1) \right)\right) < 1$$
If $m_2 = (q^3-q)/((q^{3/2}-q^{1/2})d)$, then
$$\mathcal{J}(G)-1 < 2.8 \left( \frac{\ln\ln(q(q-1))}{q+1} + \frac{4+\log_2 \log_2(q)}{(q^3-q)/((q^{3/2}-q^{1/2})d)} \ln \ln \left( q^{3/2}-q^{1/2} \right)\right) < 1.$$
These inequalities involve a decreasing function, so it is enough to check them for small values of $q$.

\item For $G=\PSU(n,q)$, $n \in \{3,\ldots,12\}$, we use the bound
$$\mathcal{J}(G)-1 < \frac{C+\log_2\log_2(q)}{m_1} \cdot 2.8 \cdot \ln \ln (q^{n^2}).$$
We bound $m_1$ using Table \ref{TableGm1}. Using that the right-hand side is decreasing, this is enough to show that $\mathcal{J}(G)<2$, except if $(n,q)$ is one of $(3,3)$, $(3,4)$, $(3,5)$, $(4,2)$. In these cases the claim follows by \cite{GitRepGAP}.

\item For $G=\PSp(n,q)$, $n \in \{4,6,8,10,12\}$, we use the bound
$$\mathcal{J}(G)-1 < \frac{C+\log_2\log_2(q)}{q^{n-\varepsilon}} \cdot 2.8 \cdot \ln \ln (q^{n^2}),$$
where $\varepsilon=1$, if $q>2$, and $\varepsilon=2$, if $q=2$. This is enough to show that $\mathcal{J}(G)<2$, except if $(n,q) \in \{(4,3),(4,4),(6,2),(8,2)\}$ (recall that $(n,q) \neq (4,2)$ as $\PSp(4,2) \cong S_6$ is not simple). If $(n,q) \in\{(4,3), (4,4), (6,2)\}$, then $\mathcal{J}(G)<2$ by \cite{GitRepGAP}. If $(n,q)=(8,2)$, we use Corollary~\ref{cor:SimplificationSimpleGroups} with $\mathcal{B}_1$, $\ell \leq 11$ by \cite[Tables 8.48, 8.49]{BHR}, $m_1 = 120$, to obtain $\mathcal{J}(G)<2$.

\item For $G$ orthogonal of type $\OO(n,q)$ ($n \geq 7$ odd, $q$ odd), $\OO^{\pm}(n,q)$ ($n \geq 8$ even), $m_1 \geq q^{n-2}$ by Lemma \ref{Bounds}. So,
$$\mathcal{J}(G)-1 < \frac{C + \log_2\log_2(q)}{q^{n-2}} \cdot 2.8 \cdot \ln \ln(q^{n^2/2-n+2}).$$
This is enough to show that $\mathcal{J}(G)<2$, except if $(n,q)=(8,2)$. In this case $\mathcal{J}(\OO^{\pm}(8,2))<2$ by \cite{GitRepGAP}.
\end{itemize}
The proof is complete.
\end{proof}

We next handle exceptional groups of Lie type. The subfield maximal subgroups of $\text{E}_8(q)$ are the ones of type $\text{E}_8(q_0)$ where $q_0=q^{1/r}$ for $r$ a prime divisor of $a$, where $q=p^a$. The proof of the following proposition was provided to us by Martin Liebeck.

\begin{prop} \label{E8lbound}
Let $G = \text{E}_8(q)$. The number of isomorphism types of non-subfield maximal subgroups of $G$ is at most $166$.
\end{prop}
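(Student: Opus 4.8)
The plan is to read the count off the classification of maximal subgroups of $G = E_8(q)$ due to Liebeck and Seitz, together with the recent work completing the almost simple case, by enumerating the non-subfield families one at a time. Write $\bar{G}$ for the simple algebraic group of type $E_8$ and $\sigma$ for the Frobenius endomorphism with $G = \bar{G}_\sigma$. The reduction theorem says that a maximal subgroup $M$ of $G$ is, up to the usual conventions, one of the following: a maximal parabolic; the normalizer $N_G(\bar{X}_\sigma)$ of a $\sigma$-stable reductive subgroup of maximal rank; the normalizer of one of the finitely many further positive-dimensional maximal closed connected subgroups $\bar{X}$ on the Liebeck--Seitz list for $E_8$; an exotic local subgroup; a subfield subgroup $E_8(q_0)$; or an almost simple group whose socle lies in an explicit finite list. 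We are excluding exactly the subfield subgroups $E_8(q_0)$ (there is no graph automorphism of $E_8$, hence no twisted subfield forms to worry about). The guiding principle is that the number of isomorphism types of subgroups in any one family is at most the number of conjugacy classes in that family, so it suffices to enumerate the distinct types, or conservatively the classes, in each non-subfield family and to verify that the totals, which are all independent of $q$, sum to at most $166$.

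First I would dispose of the positive-dimensional and local families, whose sizes are read directly off the standard tables and are genuinely uniform in $q$. There are exactly $8$ classes of maximal parabolic subgroups, one for each node of the Dynkin diagram. The reductive subgroups of maximal rank, together with the remaining positive-dimensional maximal connected subgroups $\bar{X}$ of $E_8$ (those of shape $A_1 E_7$, $A_2 E_6$, $A_8$, $D_8$, products of type-$A_4$, $A_2$ and $A_1$ factors, and the subgroups $G_2 F_4$, products involving $G_2$, $B_2$ and $A_1$), each contribute a bounded number of finite forms $N_G(\bar{X}_\sigma)$, the multiplicity arising from the inequivalent $\sigma$-twists of $\bar{X}$; these numbers are tabulated in the Liebeck--Seitz--Testerman paper on subgroups of maximal rank and in the Liebeck--Seitz memoir. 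Adding the finitely many exotic local subgroups from the Cohen--Liebeck--Saxl--Seitz classification, the combined contribution of all positive-dimensional and local families is a fixed, explicitly known number.

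The main obstacle is the almost simple family: maximal subgroups $M$ with socle $S = \operatorname{soc}(M)$ a simple group other than $E_8(q_0)$. Here I would invoke the theorem of Liebeck and Seitz bounding $|S|$ and restricting $S$ to be of Lie type in the defining characteristic, of Lie type of bounded order in cross characteristic, alternating, or sporadic, and then the recent complete determination (building on their partial results, in the work of Craven and collaborators) of precisely which such $S$ occur inside $E_8(q)$. For each admissible socle one records the at most a few conjugacy classes it yields and identifies the corresponding isomorphism types; summing over the explicit list produces the bulk of the $166$. This is the delicate step: it rests on the deepest and most recent portions of the classification of maximal subgroups of exceptional groups, and one must take care that, although the number of classes realising a given socle can depend on $q$, it remains uniformly bounded, so that the contribution of this family is still a constant.

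Finally I would add the contributions of all the non-subfield families -- parabolic, maximal-rank, the remaining positive-dimensional, exotic local, and almost simple -- and check that the grand total is at most $166$. Since every isomorphism type is realised by at least one conjugacy class, this sum bounds the number of isomorphism types of non-subfield maximal subgroups of $E_8(q)$, which is exactly the assertion. The only arithmetic content is the final summation of the tabulated family sizes; everything else is bookkeeping layered on top of the cited classification, which is why the heart of the argument -- and the reason it is attributed to Liebeck -- lies in marshalling and counting the almost simple contribution correctly.
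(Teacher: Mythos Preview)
Your plan is exactly the paper's approach: invoke the Liebeck--Seitz structure theorem, split the non-subfield maximal subgroups into parabolics, reductive maximal-rank, other positive-dimensional, exotic local, and almost simple, and then count. The paper carries this out with the explicit numbers $N_1=8$, $N_2\le 29$, $N_3\le 7$, $N_4\le 3$, and for the almost simple family $\mathcal{U}$ it subdivides by socle type (same-characteristic Lie type not $\PSL(2,p^a)$; $\PSL(2,p^a)$; alternating; sporadic; cross-characteristic Lie type), obtaining a characteristic-dependent bound on $N_5$ that is maximised at $p=2$ with $N_5\le 119$, whence $8+29+7+3+119=166$.

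What your outline leaves implicit, and what constitutes the entire content of the argument, is this explicit enumeration with precise citations: the maximal-rank tables of Liebeck--Saxl--Seitz for $N_2$; the Liebeck--Seitz survey table for $N_3$; Craven's medium-rank paper for class (5a); the Liebeck--Seitz bound $p^a\le 2\cdot t(\Sigma(E_8))$ together with Lawther's computation $t(\Sigma(E_8))=1312$ for class (5b); Craven's alternating paper for (5c); and the Liebeck--Seitz tables for sporadic and cross-characteristic socles in (5d), (5e). In each of (5a)--(5e) one must also count \emph{almost simple} overgroups of each admissible socle, not just the socles themselves, and the bound on $N_5$ genuinely varies with the characteristic $p$ rather than being a single constant---the paper takes the maximum over $p$. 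None of this is difficult once the references are in hand, but without the numbers your proposal is a correct table of contents rather than a proof.
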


\begin{proof} 
Let $q = p^a$, where $p$ is prime. Denote by $\mbox{Lie}(p)$ the finite simple groups of Lie type over fields of characteristic $p$ and by $\mbox{Lie}(p')$ the finite simple groups of Lie type over fields of characteristic different from $p$. Regard $G$ as $\overline G^F$, where $\overline G$ is a simple algebraic group of type $\text{E}_8$ over $\overline{\mathbb{F}}_p$ and $F$ is a Frobenius endomorphism. According to \cite[Theorem 8]{LiebeckSeitzSurvey}, the non-subfield maximal subgroups of $G$ fall into the following families:
\begin{itemize}
\item[(1)] maximal parabolic subgroups, these correspond to the nodes of the Dynkin diagram;
\item[(2)] subgroups $N_G(\overline M^F)$, where $\overline M$ is reductive of maximal rank: the possibilities are listed in \cite[Tables 5.1, 5.2]{LiebeckSaxlSeitz92};
\item[(3)] subgroups $N_G(\overline M^F)$, where $\overline M$ is reductive of non-maximal rank: these are listed in \cite[Table 3]{LiebeckSeitzSurvey}, together with the subgroup $\PGL_2(q) \times \mbox{Sym}_5$;
\item[(4)] `exotic local' subgroups $2^{5+10}.\SL(5,2)$, $5^3.\SL(3,5)$ (see \cite[Section 2]{LiebeckSeitzSurvey}), and the `Borovik' subgroup $(\text{Alt}_5 \times \text{Alt}_6).2^2$;
\item[(5)] the class ${\mathcal U}$ of almost simple subgroups with socle $S$, not occurring in items (1)-(4); we divide these into subclasses:
\begin{itemize}
\item[(a)] ${\mathcal U}_{p,1}$: $S \in \mbox{Lie}(p),\,S \neq \PSL(2,p^a)$
\item[(b)] ${\mathcal U}_{p,2}$: $S \in \mbox{Lie}(p),\,S = \PSL(2,p^a)$
\item[(c)] ${\mathcal U}_{\mbox{Alt}}$: $S$ alternating
\item[(d)] ${\mathcal U}_{\mbox{Spor}}$: $S$ sporadic
\item[(e)] ${\mathcal U}_{p'}$: $S \in \mbox{Lie}(p')$.
\end{itemize}
\end{itemize}

Let $N_i$ be the number of isomorphism types of maximal subgroups in class (i), for $i=1,\ldots,5$. By inspection, 
\begin{equation}\label{ins}
N_1 = 8,\;N_2\leq 29 ,\;N_3\leq 7,\;N_4\leq 3.
\end{equation}
To bound $N_5$, we distinguish between different defining characteristics and write $N_{5}(p)$. We separate the five cases given above as $N_5(p) = N_{5a}(p)+\cdots + N_{5e}(p)$, where $N_{5a}(p)$ is the number of isomorphism types in characteristic $p$ for the class (5a), and so on.

By \cite[Theorem 1.1]{CravenMediumRank}, the possibilities for $S$ in class (5a) are
\[
\,\PSL(3,4), \, \PSU(3,4),\,\PSU(3,8),\,\PSU(4,2),\,^2\!B_2(8),\ \PSL(3,3), \,\PSU(3,3).
\]

In \cite[Theorem 1.1]{CravenMediumRank} the group $\Sp_4(2)'$ also appears, but it is isomorphic to $A_6$ and hence falls in another class for us. The defining characteristic is $p$ in the first five cases above, so that $p=2,2,2,2,2,3,3$ respectively. 
Counting the almost simple groups with these socles, we obtain
\[
N_{5a}(2) \leq 26, \ N_{5a}(3) \leq 4, \ N_{5a}(p) =0 \text{ else}.
\]

Now consider class (5b), where $S = \PSL(2,p^a)$. We use \cite[Theorem 6]{LiebeckSeitz98} (and remark 3 after its statement). Note that case (ii) of this theorem is contained in our classes (2) and (3). Therefore $p^a \leq t(\Sigma(\text{E}_8)) \cdot (2,p-1)$ where $t(\Sigma(\text{E}_8))$ is a constant defined in terms of the root system $\Sigma(\text{E}_8)$ (see \cite[Definition on page 3410]{LiebeckSeitz98}); and in fact $t(\Sigma(\text{E}_8)) = 1312$ by \cite[Theorem 2]{Lawther}.
Counting the almost simple groups with socle $S$, excluding the socles $\PSL(2,4) \cong \PSL(2,5) \cong A_5$ and $\PSL(2,9) \cong A_6$ which lie in class (5c), we obtain
\begin{align*}
   N_{5b}(2) &\leq 24, \ N_{5b}(3) \leq 30, \ N_{5b}(5) \leq 17, \ N_{5b}(7) \leq 19, \ N_{5b}(p) \leq 11 \ (11 \leq p \leq 13), \\ \ N_{5b}(p) &\leq 7 \ (17 \leq p \leq 47), \ N_{5b}(p) \leq 2 \ (53 \leq p \leq 2617), \ N_{5b}(p) =0 \text{ else}.
\end{align*}

For class (5c), the main result of \cite{CravenAlternating} implies that $S$ is $A_6$ or $A_7$. 
If $p=2$, then only $A_6$ occurs and if $p=5$ then only $A_7$ occurs. Hence,
\[
N_{5c}(2) \leq 5, \ N_{5c}(5) \leq 2, \ N_{5c}(p) \leq 7 \text{ else.} 
\]

For class (5d), the sporadic simple subgroups of $\text{E}_8$ are listed in \cite[Table 10.2]{LiebeckSeitz99}; they are
\[
\text{M}_{11},\ \text{M}_{12}\,(p=2,5),\ \text{J}_1\,(p=11),\ \text{J}_2\,(p=2),\ \text{J}_3\,(p=2),\ \text{Th}\,(p=3).
\]
So,
\[
N_{5d}(2) \leq 7, \ N_{5d}(3) \leq 2, \ N_{5d}(5) \leq 3, \ N_{5d}(11) \leq 2, \ N_{5d}(p) = 1 \text{ else.}
\]

Finally consider class (5e). The simple groups $S \in \mbox{Lie}(p')$ contained in $\text{E}_8$ are given in \cite[Tables 10.3, 10.4]{LiebeckSeitz99}. They are:
\[
\begin{array}{l}
\PSL(2,r)\;(r=7,8,11,13,16,17,19,25,27,29,31,32,37,41,49,61), \\
\PSL(3,3),\,\PSL(3,5),\,\PSL(4,3),\,\PSL(4,5), \\
\PSU(3,3),\,\PSU(3,8),\,\PSU(4,2), \\
\PSp(4,5),\,\PSp(6,2),\,\Omega_8^+(2),\\
\text{G}_2(3),\,^3\!\text{D}_4(2),\,^2\!\text{F}_4(2)',\,^2\!\text{B}_2(8),\,^2\!\text{B}_2(32).
\end{array}
\]

Here $\PSL(2,37)$, $\PSL(4,3)$, $\PSL(4,5)$ and $\PSp(4,5)$ only come up in case $p=2$ and $^2\!\text{B}_2(32)$ only comes up for $p=5$.
Using that these subgroups only come up in cross characteristic, we obtain
\begin{align*}
N_{5e}(2) &\leq 57, \ N_{5e}(3) \leq 59, \ N_{5e}(5) \leq 64, \ N_{5e}(7) \leq 62, \\
\ N_{5e}(p) &\leq 67 \ (p \in \{11,13,17,19,29,31,41,61 \}), \ N_{5e}(p) \leq 69 \text{ else.} 
\end{align*}
So the maximal bound we obtain for $N_{5e}$ occurs for $p=2$ and is 119.

Summing up, we obtain $N_1 + N_2 + N_3 + N_4 + N_5 \leq 166$ and the proposition follows. 
\end{proof}

\begin{prop}\label{Exceptional}
If $G$ is a simple exceptional group of Lie type, then $\mathcal{J}(G)<2$. In particular, $G$ is HS.
\end{prop}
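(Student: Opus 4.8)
The plan is to treat each family of exceptional groups of Lie type separately and, in each case, to verify the hypothesis $\mathcal{B}_n(G) < 1$ of Corollary~\ref{cor:SimplificationSimpleGroups}, taking $n = 1$ whenever possible. Recall that
\[
\mathcal{B}_1(G) = \frac{\ell}{m_1}\,(e^\gamma + 1)\,\ln\ln\!\left(\frac{|G|}{m_1}\right),
\]
so for each family I need three ingredients: an upper bound on $|G|$ of the form $q^{D}$; a lower bound $m_1 \geq q^{d}$ on the smallest index of a maximal subgroup; and an upper bound on the number $\ell$ of isomorphism types of maximal subgroups. The families to handle are the Suzuki groups $\Sz(q) = {}^2B_2(q)$, the small and large Ree groups ${}^2G_2(q)$ and ${}^2F_4(q)$, together with $G_2(q)$, ${}^3D_4(q)$, $F_4(q)$, $E_6(q)$, ${}^2E_6(q)$, $E_7(q)$ and $E_8(q)$; the Tits group ${}^2F_4(2)'$ has already been dealt with in Proposition~\ref{Sporadics}.

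The bound $|G| \leq q^{D}$ comes from the standard order formulas, with $D$ the dimension of the ambient algebraic group (for instance $D = 14, 28, 52, 78, 133, 248$ for $G_2, {}^3D_4, F_4, E_6, E_7, E_8$). The lower bound $m_1 \geq q^{d}$ is read off from the known minimal-degree permutation representations of the exceptional groups: the smallest maximal subgroup is in each case a maximal parabolic of index a fixed power of $q$ times lower-order terms, giving for instance $m_1 > q^2$ for $\Sz(q)$, $m_1 > q^3$ for ${}^2G_2(q)$, $m_1 > q^5$ for $G_2(q)$, $m_1 > q^9$ for ${}^3D_4(q)$, and correspondingly larger exponents up to $m_1 > q^{57}$ for $E_8(q)$. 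For the bound on $\ell$ I would use Proposition~\ref{E8lbound} for $E_8(q)$ — giving $\ell \leq 166 + \log_2\log_2 q$ after adding the at most $\pi(a) \leq \log_2\log_2 q$ subfield subgroups, where $q = p^a$ — and the analogous classifications (the work of Liebeck and Seitz for $F_4, E_6, {}^2E_6, E_7$, and the explicit maximal-subgroup lists of Suzuki, Kleidman and Malle for the smaller families $\Sz, {}^2G_2, {}^2F_4, G_2, {}^3D_4$) to bound $\ell$ by a family-dependent constant plus $\log_2\log_2 q$.

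With these three ingredients in hand, I would feed them into the displayed bound for $\mathcal{B}_1(G)$. The decisive point is that $m_1$ grows like a fixed power $q^{d}$ of $q$ with $d \geq 2$, whereas $\ell$ grows only like $\log_2\log_2 q$ and $\ln\ln(|G|/m_1)$ only like $\ln q$; hence $\mathcal{B}_1(G) \to 0$ as $q \to \infty$, and the inequality $\mathcal{B}_1(G) < 1$ holds for all $q$ above an explicit, family-dependent threshold. For the finitely many remaining small values of $q$ below each threshold (and for any group in which a maximal subgroup fails to have order at least $8$, handled by Lemma~\ref{maxs3} exactly as in Corollary~\ref{cor:SimplificationSimpleGroups}), I would either pass to $\mathcal{B}_2(G)$ or compute $\mathcal{J}(G)$ directly by GAP, using the explicit lists of maximal subgroups. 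Finally $\mathcal{J}(G) < 2$ yields that $G$ is HS by Lemma~\ref{basics}(1).

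The main obstacle is the bookkeeping for the medium-rank groups $F_4(q)$, $E_6(q)$, ${}^2E_6(q)$ and $E_7(q)$: obtaining an explicit and provably correct upper bound on $\ell$ there requires assembling the maximal-subgroup classification from several recent and scattered sources (as was done carefully for $E_8$ in Proposition~\ref{E8lbound}), and then isolating the genuinely small cases of $q$ that must be checked by computer. Once the $\ell$-bounds are pinned down, the remaining inequalities are routine, since the power of $q$ appearing in $m_1$ dominates all other factors.
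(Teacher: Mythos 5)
Your proposal follows essentially the same route as the paper's proof: apply Corollary~\ref{cor:SimplificationSimpleGroups} with $\mathcal{B}_1$, family by family, using a bound $|G|\leq q^r$, a polynomial lower bound on $m_1$ from the minimal permutation degrees, and a bound $\ell \leq C + \log_2\log_2 q$ (via Proposition~\ref{E8lbound} for $E_8$ and the literature on maximal subgroups otherwise), then exploiting that the resulting expression is decreasing in $q$. The only notable differences are that the paper's $\ell$-bounds for $F_4$, $E_6$, ${}^2E_6$ and $E_7$ come from Craven's recent classification papers rather than Liebeck--Seitz, and that the paper's inequality already holds at the smallest admissible $q$ in each family, so the GAP fallback for small $q$ that you allow for turns out to be unnecessary.
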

\begin{proof}
Let $G$ be a simple exceptional group of Lie type over a field of order $q=p^a$, for $p$ prime. Denote by $\ell$ the number of isomorphism types of maximal subgroups of $G$ and by $m_1$ the smallest index of a proper subgroup of $G$. Let $C$ be a number such that $\ell - \log_2\log_2 q \leq C$. We will use Corollary~\ref{cor:SimplificationSimpleGroups} with $\mathcal{B}_1$ in all cases with the data given in Table \ref{TableExceptionalGroups}. I.e. we verify that
\begin{equation} \label{ineqfg}
\mathcal{J}(G)-1 \leq (e^{\gamma}+1) \cdot (\ell/m_1) \cdot \ln\ln(|G|/m_1) < 1
\end{equation}
in all cases. 

We comment on the sources of the data in Table~\ref{TableExceptionalGroups}. 
%The content of the $m_1$ column follows from \cite{g2f4,e6e7e8,twisted}.
The content of the $m_1$ column was computed in a series of papers by Vasil'ev and is compiled in \cite[Table 4]{GMPS}. We note that for the groups $E_7(q)$ this table contains a typo in a sign (cf. \cite{e6e7e8}).
The upper bound for $C$ follows from 
\cite[Theorem 4.1]{Wilson} for ${^2}\text{B}_2(q)$, 
\cite[Table 4.1]{Wilson} for $\text{G}_2(q)$, 
\cite[Theorem 4.2]{Wilson} for ${^2}\text{G}_2(q)$, 
\cite[Theorem 4.3]{Wilson} for ${^3}\text{D}_4(q)$ and 
\cite[Theorem 4.5]{Wilson} for ${^2}\text{F}_4(q)$. Moreover, for $\text{E}_8(q)$ we use Proposition \ref{E8lbound}.

For $\text{F}_4(q)$ we use \cite[Tables 1, 7, 8]{CravenF4E62E6}: note here that, taking into account the second and third columns, \cite[Table 1]{CravenF4E62E6} contributes $2$ for $p=2$ and at most $6$ for $p$ odd, \cite[Table 7]{CravenF4E62E6} contributes at most $13$ in case $p$ odd and \cite[Table 8]{CravenF4E62E6} contributes at most $17$ when $p=2$. So in total we obtain the value $19$. 

For $\text{E}_6(q)$ we use \cite[Tables 2, 9]{CravenF4E62E6}: here \cite[Table 6]{CravenF4E62E6} contributes at most $5$, taking into account the second and third column, and \cite[Table 9]{CravenF4E62E6} contributes at most 22. 

For ${^2}\text{E}_6(q)$ we use \cite[Tables 3, 10]{CravenF4E62E6}: here \cite[Table 3]{CravenF4E62E6} contributes at most $5$, taking into account the second and third columns, and \cite[Table 10]{CravenF4E62E6} contributes at most $20$, taking into account the second column.

For $\text{E}_7(q)$ we use \cite[Tables 1.1, 1.2, 4.1]{CravenE7}: here, taking into account the second and third columns, \cite[Table 1.1]{CravenE7} contributes $7$, for \cite[Table 1.2]{CravenE7} we take into account all the possible almost simple groups with the given socle which gives at most $9$ for the first line of the table and \cite[Table 4.1]{CravenE7} contributes $34$.

In order to prove \eqref{ineqfg}, we choose a number $r$ such that $|G| \leq q^r$. Then we bound
\begin{equation}\label{BoundsExceptional}
\mathcal{J}(G)-1 \leq (e^{\gamma}+1) \cdot (\ell/m_1) \cdot \ln \ln (|G|/m_1) \leq  \frac{2.8 \cdot (C+\log_2\log_2 q)}{m_1} \ln (r \ln q).
\end{equation}
The values of $C$ and $r$ are given in Table \ref{TableExceptionalGroups}. Since $m_1=m_1(q)$ is always a polynomial of degree at least $2$, the right-hand side of \eqref{BoundsExceptional} is a function of $q$ that is decreasing for $q \geq 2$, which we evaluate at the smallest available value of $q$ and deduce that $\mathcal{J}(G) < 2$ in all cases by \cite{GitRepMathematica}. 

\begin{table}[ht] 
\caption{Data about exceptional groups of Lie type}
\begin{tabular}{|c|c|c|c|c|} \hline
\mbox{Group} & $|G|$ & $C$ & $r$ & $m_1$ \\ \hline
${^2}\text{B}_2(q) = \mbox{Sz}(q)$, & $(q-1)q^2(q^2+1)$ & $4$ & $6$ & $q^2+1$ \\
$q=2^{2k+1} \geq 8$ & & & & \\ \hline
$\text{G}_2(q)$ & $q^6 (q^6-1) (q^2-1)$ & $11$ & $14$ & $(q^6-1)/(q-1)$ \mbox{except } \\
$q > 2$ & & & & $m_1(3)=351$,\ $m_1(4)=416$ \\ \hline
${^2}\text{G}_2(q)$ & $(q^3+1)q^3(q-1)$ & $5$ & $8$ & $q^3+1$ \\
$q=3^{2k+1} \geq 27$ & & & & \\ \hline
${^3}\text{D}_4(q)$ & $q^{12} (q^8+q^4+1) (q^6-1) (q^2-1)$ & $12$ & $29$ & $(q+1)(q^8+q^4+1)$ \\ \hline
$\text{F}_4(q)$ & $q^{24} \prod_{i \in I} (q^i-1)$ & $19$ & $52$ & $(q^4+1)(q^{12}-1)/(q-1)$ \\ 
& $I=\{2,6,8,12\}$ & & & \\ \hline
${^2}\text{F}_4(q)$ & $q^{12} (q^6+1) (q^4-1) (q^3+1) (q-1)$ & $9$ & $28$ & $(q^6+1)(q+1)(q^3+1)$ \\
$q=2^{2k+1} \geq 8$ & & & & \\ \hline
$\text{E}_6(q)$ & $\frac{1}{(3,q-1)} q^{36} \prod_{i \in I} (q^i-1)$ & $27$ & $78$ & $\frac{(q^9-1)(q^8+q^4+1)}{(q-1)}$ \\ 
& $I=\{2,5,6,8,9,12\}$ & & & \\ \hline
${^2}\text{E}_6(q)$ & $\frac{1}{(3,q+1)} q^{36} \prod_{i \in I} (q^i-(-1)^i)$ & $25$ & $82$ & $\frac{(q^{12}-1)(q^6-q^3+1)(q^4+1)}{(q-1)}$ \\
& $I=\{2,5,6,8,9,12\}$ & & & \\ \hline
$\text{E}_7(q)$ & $\frac{1}{(2,q-1)} q^{63} \prod_{i \in I} (q^i-1)$ & $50$ & $133$ & $\frac{(q^{14}-1)(q^9+1)(q^5+1)}{(q-1)}$ \\
& $I=\{2,6,8,10,12,14,18\}$ & & & \\ \hline
$\text{E}_8(q)$ & $q^{120} \prod_{i \in I} (q^i-1)$ & $166$ & $248$ &$\frac{(q^{30}-1)(q^{12}+1)(q^{10}+1)(q^6+1)}{(q-1)}$ \\
& $I=\{2,8,12,14,18,20,24,30\}$ & & & \\ \hline
\end{tabular} \label{TableExceptionalGroups}
\end{table}
The proof is completed.
\end{proof}

\section{Proof of main theorems}

The proofs of the main theorems are now direct consequences of the work done above and the CFSG. We collect the necessary information.

\textit{Proof of Theorem~\ref{th:Symmetric}}: Let $G$ be a symmetric group. If $G$ is finite and $n \geq 7$, the statement is a consequence of Proposition \ref{symfbound}. For $n \leq 6$ the result follows from \cite[Theorem A]{MS}. If $G$ is an infinite symmetric group, then it has no proper subgroup of finite index as a consequence of the Schreier-Ulam-Baer Theorem \cite[Satz]{Baer}. \hfill \qed

\textit{Proof of Theorem~\ref{th:Simple}}: For finite simple groups this follows from the Classification of Finite Simple Groups and Propositions \ref{Sporadics}, \ref{symfbound}, \ref{Classical} and \ref{Exceptional}. For infinite simple groups the claim is clear, as an infinite simple group has no proper subgroups of finite index. Indeed, the normal core of any subgroup of finite index also has finite index.  \hfill \qed

\textit{Proof of Theorem~\ref{th:Asymptotic}}: This is a consequence of the inequalities appearing in the proofs of the propositions above. More specifically, the inequalities given in \eqref{BoundJAn}, 
\eqref{BoundJClassicalqBigger2nBig}, 
\eqref{14case}, 
\eqref{BoundJClassicalqBigger2n13}, 
\eqref{BoundsnSmall} (together with the information in Tables~\ref{TableGm1} and \ref{TableC}) and \eqref{BoundsExceptional} (together with the information in Table~\ref{TableExceptionalGroups}).  \hfill \qed

\begin{rem}
One could ask how far the results we achieved above can be generalized to other classes of groups, such as almost simple groups or direct products of simple groups. Though for some of these subclasses, for example for extension of simple groups by groups of prime order, this is likely possible, there is no hope that the methods presented here will suffice to prove the Herzog-Sch\"onheim Conjecture for all such groups.

To see this define a function $\alpha: \mathbb{N} \rightarrow \mathbb{N}$ by $\alpha(n) = p_1 p_2 \cdots p_n$ where $p_1$,...,$p_n$ denote the smallest $n$ pairwise different primes. Then the simple group $G_n = \PSL(2, 2^{\alpha(n)})$ has an outer automorphism $\sigma$ of order $\alpha(n)$, given by the Galois action on the finite field $\mathbb{F}_{2^{\alpha(n)}}$. So the almost simple group $G_n \rtimes \langle \sigma \rangle$ contains maximal subgroups of index $p_1$, $p_2$,...,$p_n$. In particular, $\mathcal{J}(G) \geq 1+\sum_{i=1}^n \frac{1}{p_i}$ which is unbounded. Similar arguments of course apply to other groups of Lie type so that the function $\mathcal{J}$ takes values bigger than $2$ for many almost simple groups.

Neither are our methods sufficient to obtain a positive result for direct products of non-abelian simple groups: again denote by $p_1$, $p_2$,..,$p_n$ the smallest $n$ primes for any natural number $n$. Define the direct product of alternating groups $H_n = A_{p_1} \times A_{p_2} \times \cdots \times A_{p_n}$. The group $H_n$ also has maximal subgroups of of index $p_1$, $p_2$, ...,$p_n$. Hence, $\mathcal{J}(H_n)$ is again unbounded.
\end{rem}

\bibliographystyle{abbrv}
\bibliography{HS}

\end{document}